\newtheorem{theorem}{Theorem}[section]
\newtheorem{corollary}[theorem]{Corollary}
\newtheorem{lemma}[theorem]{Lemma}
\newtheorem{proposition}[theorem]{Proposition}
\theoremstyle{definition}
\newtheorem{definition}[theorem]{Definition}
\newtheorem{remark}[theorem]{Remark}
\newtheorem{example}[theorem]{Example}
\theoremstyle{remark}
\renewcommand{\theclaim}{\textup{\theclaim}}
\newtheorem*{acknowledgements}{Acknowledgements}
\numberwithin{equation}{section}
\def\openone
\newbox\ipbox
\newcommand{\diracb}[1]{\left\langle #1\mathrel{\mathchoice

{\setbox\ipbox=\hbox{$\displaystyle \left\langle\mathstrut
#1\right.$}

\vrule height\ht\ipbox width0.25pt depth\dp\ipbox}

{\setbox\ipbox=\hbox{$\textstyle \left\langle\mathstrut
#1\right.$}

\vrule height\ht\ipbox width0.25pt depth\dp\ipbox}

{\setbox\ipbox=\hbox{$\scriptstyle \left\langle\mathstrut
#1\right.$}

\vrule height\ht\ipbox width0.25pt depth\dp\ipbox}

{\setbox\ipbox=\hbox{$\scriptscriptstyle \left\langle\mathstrut
#1\right.$}

\vrule height\ht\ipbox width0.25pt depth\dp\ipbox}

}\right. }
\newcommand{\dirack}[1]{\left. \mathrel{\mathchoice

{\setbox\ipbox=\hbox{$\displaystyle \left.\mathstrut
#1\right\rangle$}

\vrule height\ht\ipbox width0.25pt depth\dp\ipbox}

{\setbox\ipbox=\hbox{$\textstyle \left.\mathstrut
#1\right\rangle$}

\vrule height\ht\ipbox width0.25pt depth\dp\ipbox}

{\setbox\ipbox=\hbox{$\scriptstyle \left.\mathstrut
#1\right\rangle$}

\vrule height\ht\ipbox width0.25pt depth\dp\ipbox}

{\setbox\ipbox=\hbox{$\scriptscriptstyle \left.\mathstrut
#1\right\rangle$}

\vrule height\ht\ipbox width0.25pt depth\dp\ipbox}

} #1\right\rangle}
\newcommand{\bz}{\mathbb{Z}}
\newcommand{\B}{\mathcal{B}}
\newcommand{\br}{\mathbb{R}}
\newcommand{\bn}{\mathbb{N}}
\def\blfootnote{\xdef\@thefnmark{}\@footnotetext}
\newcommand{\Aut}[1]{\text{Aut}(#1)}
\newcommand{\covol}{\mbox{\textup{cov}}}
\def\-{^{-1}}
\def\B{\mathcal{B}}
\def\cB{\mathcal{B}}
\def\ty{\emptyset}
\def\orb{\operatorname*{orbit}}
\newcommand{\lgl}{\prec_{\Gamma\times\Lambda}}
\begin{document}

\title[On common fundamental domains]{On common fundamental domains}
\author{Dorin Ervin Dutkay}
\blfootnote{}
\address{[Dorin Ervin Dutkay] University of Central Florida\\
	Department of Mathematics\\
	4000 Central Florida Blvd.\\
	P.O. Box 161364\\
	Orlando, FL 32816-1364\\
U.S.A.\\} \email{dorin.dutkay@ucf.edu}
\author{Deguang Han}
\address{[Deguang Han] University of Central Florida\\
	Department of Mathematics\\
	4000 Central Florida Blvd.\\
	P.O. Box 161364\\
	Orlando, FL 32816-1364\\
U.S.A.\\}
\email{deguang.han@ucf.edu}

\author{Palle E.T. Jorgensen}
\address{[Palle E.T. Jorgensen]University of Iowa\\
Department of Mathematics\\
14 MacLean Hall\\
Iowa City, IA 52242-1419\\}\email{jorgen@math.uiowa.edu}

\author{Gabriel Picioroaga}
\address{[Gabriel Picioroaga] Department of Mathematical Sciences\\
 Dakota Hall\\ University of South Dakota\\ 414 E. Clark St.\\ Vermillion SD 57069 \\
U.S.A.}
\email{Gabriel.Picioroaga@usd.edu}
\thanks{Partial support from the National Science Foundation.} 
\subjclass[2010]{22F10, 22D05, 22D30, 05B45, 05B40}
\keywords{fundamental domain, packing, group action, lattice, polynomial growth}

\begin{abstract}
  We find conditions under which two measure preserving actions of two groups on the same space have a common fundamental domain. Our results apply to commuting actions with separate fundamental domains, lattices in groups of polynomial growth, and some semidirect products. We prove that two lattices of equal co-volume in a group of polynomial growth, one acting on the left, the other on the right, have a common fundamental domain. 
\end{abstract}
\maketitle \tableofcontents
\section{Introduction}\label{intr}
%

In \cite{HW01}, motivated by the study of Weyl-Heisenberg (or Gabor) frames, Deguang Han and Yang Wang proved that two lattices in $\br^n$ having the same finite co-volume have a common measurable fundamental domain. We will present a much more general result in Theorem \ref{thi13}: consider two lattices in a group of polynomial growth (Definition \ref{defi11}), one acting on the left and the other acting on the right. Assuming that the two given lattices have the same co-volume, we then prove that they must have a common measurable fundamental domain.

 It is easy to see that the condition is necessary, i.e., that if there is a common fundamental domain for a given left/right pair of lattices, then the value of the co-volume numbers computed from the two sides must be the same. But the converse implication seems unexpected: It states that a difference in these two numbers is the only obstruction. In other words, when a pair of lattices is given, then a difference in the value of these co-volume numbers is the only obstruction to the existence of a common fundamental domain.

 Since our fundamental domains and the corresponding tilings of the ambient group are defined within the measurable category, there is a vast variation of possibilities, and it is often difficult to produce algorithms for computing common fundamental domains. While lattices are known in the case of $\br^n$, this is not the case for non-abelian groups. Hence in section \ref{lat} we specialize to nilpotent Lie groups, and the Heisenberg group $G = H(\br)$ in particular. Moreover for each lattice, we show that there are natural and concrete choices of fundamental domains. Nonetheless, explicit formulas for {\it common} fundamental domains are hard to come by.

     Despite these difficulties, our condition in Theorem \ref{thi13} that a pair of left/right lattices yields equal co-volume numbers is relatively easy to verify. And hence we get the existence of a common fundamental domain in all these cases.

Fundamental domains are important in direct integral decompositions for unitary representations (see e.g., \cite{Pa00}) where one often use fundamental domains as ``parameters'' in direct integral decompositions. Hence for such applications, it is important that a measurable choice be made. In our discussion below of existence of a common fundamental domains, measurability is understood implicitly.

Common fundamental domains appear also in connection with measure equivalence of groups, a notion introduced by Gromov in \cite{Gro}. Two groups $\Gamma$ and $\Lambda$ are said to be {\it measure equivalent} if there exists a measure space $(\Omega,m)$ and two commuting measure preserving actions of $\Gamma$ and $\Lambda$ such that each action has a fundamental domain. Such a measure space is called a measure equivalence coupling of the two groups. For a survey on the measure equivalence of groups we refer to \cite{Fur2}. We mention here just a few remarkable results: any two amenable groups are measure equivalent; on the other hand
if $\Gamma$ is a lattice in a higher rank simple Lie group, and if a countable group $\Lambda$ is measure equivalent (ME) to $\Gamma$, then $\Lambda$ itself must essentially be a lattice in a higher rank simple Lie group (see \cite{Fur2} and the references therein).

In \cite{Fur} it is proved that two groups have orbit equivalent actions iff they have a measure equivalent coupling, where the fundamental domains have equal measure, iff they have a measure equivalent coupling with a {\it common} fundamental domain. Thus the situation we are interested in is not hard to come by. We would like to emphasize that, while our results are directly related to measure equivalence of groups, we are not changing the measure space as in \cite{Fur}, but we keep it the same. 
There are examples of measure equivalent couplings that have fundamental domains of the same measure but no common fundamental domain, see Example \ref{ex1.6}.

 Discrete groups play a role in dynamical systems where they arise as transformation groups in a rich variety of instances: ergodic theory, geometry, direct integral theory etc, see e.g., \cite{Gro,Har92,Har77}, and \cite{ZM08}. And in more recent applied areas, they play a role in the analysis of time-frequency wavelets and frames, see e.g., \cite{GS08}.

In more detail, the theory of discrete subgroups in ambient continuous groups includes such applications as fundamental groups and covering spaces, Kleinian groups, Fuchsian groups, arithmetic groups, hyperbolic geometry, modular and cusp forms, automorphic forms, Teichmuller spaces, moduli spaces, deformation spaces, Hecke operators \cite{Wa05}; quasiconformal mappings, the theory of boundary spaces for random walks on infinite graphs \cite{JoPe09},  and more.

    To understand in more detail the role played by fundamental domains in this variety of applications, the reader may find the book \cite{Har77} useful.

 Fundamental domains of a discrete group of transformations may be constructed under very wide assumptions about the space $M$. For example, if $\Gamma$ is a group of isometries of a complete connected Riemannian manifold $M$ (Lie group in particular), then as its fundamental domain, one may choose the {\it Dirichlet domain}
 $$D(x_0):=\{x\in M\,|\,\rho(x,x_0)\leq \rho(x,\gamma\cdot x_0)\,\forall \gamma\in\Gamma\}$$
 for any point $x_0$ with trivial fixer $\Gamma_{x_0}$. See e.g., \cite{VGS00}.

\begin{definition}\label{dei1}
Let $(M,\cB,m)$ be a $\sigma$-finite measure space, i.e., $M$ is a set, $\cB$ is a $\sigma$-algebra of subsets of $M$, $m$ is a $\sigma$-finite measure on $M$, and $\cB$ is complete with respect to $m$.

An {\it automorphism} of $(M,\cB,m)$ is an invertible measurable transformation $T$ on $(M,\cB,m)$ that is non-singular, i.e., 
$m(B)=0$ implies $m(T(B))=m(T^{-1}(B))=0$.

Let $G$ be a countable group. 
An {\it action} of a group $G$ on a measure space $(M,\cB,m)$ is a group morphism $\phi$ from $G$ to the group of automorphisms of $(M,\cB,m)$. 
If $\phi:G\rightarrow \operatorname*{Aut}(M)$, we will denote by $g\cdot x:=\phi(g)(x)$, $x\in M$, $g\in G$, and we say that $G$ acts on the left; we will use the notation $x\cdot g$ when we want the action of $G$ to be on the right.     

We say that the action is {\it measure-preserving} if $m(g\cdot B)=m(B)$ for all $B\in\cB$ and all $g\in G$.

We say that a set $B\in\cB$ is {\it $G$-invariant} (or just {\it invariant}) if for a.e. $x\in B$, and all $g\in G$, $g\cdot x\in B$.
We say that a measure $\mu$ on $(M,\cB)$ is {\it $G$-invariant} (or just invariant) if for all $B\in\cB$, and all $g\in G$
$\mu( g\cdot B)=\mu(B)$.

\end{definition}

\begin{definition}\label{dei2}
If $(M,\cB,m)$ is a measure space, and $(M_i)_{i\in I}$ is a family of subsets in $\cB$, we say that the sets $(M_i)_i$ form a partition of $M$, up to measure zero, if $m(M\setminus\cup_i M_i)=0$ and 
$m(M_i\cap M_j)=0$ for all $i\neq j$.

Consider an action of a group $G$ on a measure space $(M,\cB,m)$. We say that a set $X\in\cB$ is a {\it fundamental domain} for this action if 
$(g\cdot X)_{g\in G}$ forms a partition of $M$ up to measure zero. We say that a family of sets $(F_i)_{i\in I}$ in $\cB$ {\it packs by $G$} if $m(g\cdot F_i\,\cap\, h\cdot F_j)=0$ whenever $(g,i)\neq (h,j)$, i.e., the sets $F_i$ have disjoint $G$-translations and they are mutually $G$-translation disjoint.
\end{definition}

\begin{definition}
Suppose we have two actions $\phi$ of the group $\Gamma$ and $\psi$ of the group $\Lambda$ on the same measure space $(M,\cB,m)$. We say that the two actions {\it commute} if $\phi(\gamma)(\psi(\lambda)(x))=\psi(\lambda)(\phi(\gamma)(x))$ for all $\gamma\in\Gamma$, $\lambda\in\Lambda$ and $x\in M$. For convenience, in this case, we can consider that the action $\phi$ is on the left: $\phi(\gamma)(x)=\gamma\cdot x$, for $\gamma\in\Gamma$, $x\in M$, and the action $\psi$ is on the right: $\psi(\lambda)(x)=x\cdot\lambda$, for $\lambda\in\Lambda$ and $x\in M$. 
\end{definition}

\begin{remark}
If we have two commuting actions of some groups $\Gamma$, $\Lambda$ on the same measure space $(M,\cB,m)$, then one can construct an action of the group $\Gamma\times\Lambda$ on $M$ by $(\gamma,\lambda)\cdot x=\gamma\cdot x\cdot\lambda$ for $\gamma\in\Gamma,\lambda\in\Lambda, x\in M$. There is just one small adjustment that has to be made: the multiplication is given by $(\gamma_1,\lambda_1)\cdot (\gamma_2,\lambda_2)=(\gamma_1\gamma_2,\lambda_2\lambda_1)$.
\end{remark}

Throughout the paper we will consider the following setup: $\Gamma$ and $\Lambda$ are two countable discrete groups acting on a measure space $(M,\B,m)$. The actions are measure preserving. We assume that both actions have fundamental domains $X$ for $\Gamma$ and $Y$ for $\Lambda$, with $0<m(X),m(Y)<\infty$. We give explicit conditions for when the two given actions have a common measurable fundamental domain $F$. It is easy to see that any two fundamental domains for a measure preserving action must have the same measure (see the proof of Corollary \ref{cor1.5}). Therefore, a necessary conditon for the existence of a common fundamental domain for the two actions, is that $m(X)=m(Y)$. Of course, in many situations, this condition is not satisfied, and that is why we formulate a more general problem. 

\begin{definition}\label{def1.5}
Let $\Gamma$ and $\Lambda$ be two countable discrete groups that act measure-preservingly on a measurable space $(M,B,\mu)$. Suppose $X$ is a fundamental domain for $\Gamma$ and $Y$ is a fundamental domain for $\Lambda$ and 
$0<m(X),m(Y)<\infty$. Assume now that $m(X)\geq m(Y)$ and 
\begin{equation}
m(X)=(k+\epsilon)m(Y),
\label{eq1.5.1}
\end{equation}
for some integer $k\geq 1$ and $0\leq\epsilon<1$. Note that $k$ and $\epsilon$ do not depend on the particular choices of the fundamental domains $X$ and $Y$.

We say that the two actions have a {\it common tiling system} if there exists $F_1,\dots,F_k,F_\epsilon$ in $\B$ such that 
\begin{enumerate}
	\item $F_1,\dots,F_k$ are fundamental domains for $\Lambda$;
	\item $F_\epsilon$ is a packing set for $\Lambda$ and $m(F_\epsilon)=\epsilon\, m(Y)$;
	\item $\{F_1,\dots, F_k,F_\epsilon\}$ forms a partition of a fundamental domain for $\Gamma$. 
\end{enumerate}

\end{definition}

Note that, when $m(X)=m(Y)$, so $k=1$ and $\epsilon=0$, a common tiling system is a common fundamental domain.\medskip

{\bf Main Problem.} The main purpose of our paper is to present necessary and sufficient conditions for the existence of a common tiling system. \medskip

In section 2, we consider first the case when the two actions commute. We will prove the following results.

\begin{theorem}\label{thi6b}
Consider two commuting measure-preserving actions of some countable (possibly finite) discrete groups $\Gamma$ and $\Lambda$ on the same measure space $(M,\cB,m)$. Assume in addition that both actions have fundamental domains of finite positive measures, $X$ for $\Gamma$ and $Y$ for $\Lambda$, and $m(X)\geq m(Y)$. 
Then the following affirmations are equivalent:
\begin{enumerate}
\item
The two actions have a common tiling system. 
\item For all sets $A\in\cB$ which are invariant for both $\Gamma$ and $\Lambda$, the following equality holds
\begin{equation}\label{eqi6_2b}
m(A\cap X)= \frac{m(X)}{m(Y)}\cdot m(A\cap Y).
\end{equation}
\end{enumerate} 
\end{theorem}

\begin{corollary}\label{cor1.5}
Consider two commuting measure-preserving actions of some countable discrete groups $\Gamma$ and $\Lambda$ on the same measure space $(M,\cB,m)$. Assume that $X$ is a fundamental domain for $\Gamma$, $Y$ is a fundamental domain for $\Lambda$, and $0<m(X),m(Y)<\infty$. Then there exists a common fundamental domain for $\Gamma$ and $\Lambda$ if and only if $m(A\cap X)=m(A\cap Y)$ for all sets $A\in\cB$ which are invariant for both $\Gamma$ and $\Lambda$. 
\end{corollary}

In Corollary \ref{corf2}, Theorems \ref{thi9} and \ref{thi10} and Proposition \ref{pr2.11}, all based on Theorem \ref{thi6b}, we present various sufficient conditions for the existence of a common tiling system. These conditions are sometimes easier to check, and we use them in section 3 for some special classes of groups. In Proposition \ref{pr2.13} and Corollary \ref{cor2.14}, we relax the commuting property of the two actions and show that the results also hold for some semi-direct products. 

In section 3, we focus on the case when $\Gamma$ and $\Lambda$ are uniform lattices in a locally compact group $G$ (Definition \ref{defcov}), and they act on $G$ by left and right translations. The central result in section 3 is Theorem \ref{thi13}, which shows that, if the group $G$ has polynomial growth (Definition \ref{defi12}), then the two actions have a common tiling system. 

\begin{definition}\label{defi12}
Let $G$ be a locally compact group with left Haar measure $m$. The group $G$ has {\it polynomial growth} if there exists a compact symmetric subset $\Omega$ that generates $G$ (i.e., $\cup_{n\in\bn}\Omega^n=G$) and constants $C>0$ and $k>0$ with the property that for any integer $n\geq 1$, 
$$m(\Omega^n)\leq C\cdot n^k.$$
Another choice for $\Omega$ would only change the constant $C$ but not the polynomial nature of this bound (see \cite{Bre07}). 
\end{definition}

\begin{theorem}\label{thi13}
Let $G$ be a locally compact group of polynomial growth with Haar measure $m$. Suppose $\Gamma$ and $\Lambda$ are two uniform lattices in $G$. Consider the action of $\Gamma$ on $G$ on the left and the action of $\Lambda$ on $G$ on the right. If $\covol_G(\Gamma)\geq \covol_G(\Lambda)$, then the two actions have a common tiling system. 
\end{theorem}

We end our paper in section 3.2 with some applications to the Heisenberg group and its lattices (Corollary \ref{corj8}).

\section{General results}\label{gen}

We begin by introducing some definitions and notations. These originate in the work of H. Dye on the classification of dynamical systems, see \cite{Ng73}. 
\begin{definition}\label{def1.6.1}\cite{Ng73}
Let $(M,\cB,m)$ be a measure space and suppose we have a measure preserving action of a discrete countable group $G$ on $M$. Let $E,F$ be two sets in $\cB$ of positive measure. We say that $E$ and $F$ are {\it $G$-equivalent} and we write $E\sim_G F$ if there exist two measurable partitions $(E_n)_{n\in\bn}$ of $E$, and $(F_n)_{n\in\bn}$ of $F$, and some labelling $\{g_n\}_{n\in\bn}$ of the elements of $G$ such that $g_n\cdot E_n=F_n$ for all $n\in\bn$. 

We write $E\prec_G F$ if there is a measurable subset $F'$ of $F$ such that $E\sim_G F'$.
\end{definition}

The following lemma is probably well known. We include it for clarity.

\begin{lemma}\label{lw1}
Consider a measure-preserving action of a group $\Gamma$ on the measure space $(M,\cB,m)$, and suppose $X$ is a fundamental domain. 
\begin{enumerate}
\item
A family of measurable subsets $(F_i)_{i\in I}$ packs by $\Gamma$ iff there exists a family $(X_i)_{i\in I}$ of disjoint measurable subsets of $X$ such that $F_i\sim X_i$ for all $i\in I$. 
\item
A set $F$ is a fundamental domain for $\Gamma$ iff $F\sim_\Gamma X$.
\end{enumerate}
\end{lemma}

\begin{proof}
(i)"$\Rightarrow$" Define $X_i:=(\Gamma\cdot F_i)\cap X$, $X_{i,g}:=(g\cdot F_i)\cap X$ for all $g\in \Gamma,i\in I$. 
Since $(F_i)$ packs, the sets $(X_{i,g})_{i\in I,g\in \Gamma}$ are mutually disjoint. 
Clearly $(X_{i,g})_{g\in \Gamma}$ is then a partition of $X_i$, and the sets $(X_i)_{i\in I}$ are disjoint subsets of $X$. 

Let $F_{i,g}:=\{x\in F_i\,|\,g\cdot x\in X\}$. Then $g\cdot F_{i,g}=X_{i,g}$, the sets $(F_{i,g})_{g\in \Gamma}$ are mutually disjoint and cover $F_i$, since $X$ is a fundamental domain. Thus $F_i\sim_\Gamma X_i$.

"$\Leftarrow$" There are measurable partitions $(X_{i,g})_{g\in \Gamma}$ of $X_i$ and $(F_{i,g})_{g\in \Gamma}$ such that 
$g\cdot F_{i,g}=X_{i,g}$. Suppose $g\cdot F_{i,g_1}\cap h\cdot F_{j,h_1}\neq \ty$ then 
$gg_1^{-1}\cdot X_{i,g_1}\cap hh_1^{-1}\cdot X_{j,h_1}\neq\ty$. Since $X$ is a fundamental domain and $(X_{i,g})$ are all disjoint inside $X$, it follows that $i=j, g_1=h_1$ and $gg_1^{-1}=hh_1^{-1}$ so $g=h$. Thus $(F_i)$ packs.

(ii) can be proved using a similar argument.

\end{proof}

Next, we prove a closely related variant of Theorem \ref{thi6b} which we will use in the sequel.

\begin{theorem}\label{thi6}
Consider two commuting measure-preserving actions of some countable (possibly finite) discrete groups $\Gamma$ and $\Lambda$ on the same measure space $(M,\cB,m)$. Assume in addition that both actions have fundamental domains of finite positive measures, $X$ for $\Gamma$ and $Y$ for $\Lambda$.
Let $k\geq1$ be an integer.
Then the following affirmations are equivalent:
\begin{enumerate}
\item

There exist $k$ fundamental domains $F_1,\dots, F_k$ for $\Lambda$, such that the family $\{F_1,\cdots, F_k\}$ packs by $\Gamma$.

\item For all sets $A\in\cB$ which are invariant for both $\Gamma$ and $\Lambda$, the following inequality holds
\begin{equation}\label{eqi6_2}
m(A\cap X)\geq k\cdot m(A\cap Y).
\end{equation}
\end{enumerate}
\end{theorem}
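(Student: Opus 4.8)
The two implications are of very different character, so I would treat them separately.

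\emph{Proof of $(1)\Rightarrow(2)$.} The plan is to isolate first the following elementary fact: if $A$ is $\Lambda$-invariant and $F$ is \emph{any} fundamental domain for $\Lambda$, then $m(A\cap F)$ does not depend on $F$; in particular $m(A\cap F)=m(A\cap Y)$. This is obtained by mutually refining the two $\Lambda$-tilings $\{F\cdot\lambda\}_{\lambda}$ and $\{Y\cdot\lambda\}_{\lambda}$, pushing the pieces $F\cap(Y\cdot\lambda)$ back by $\lambda^{-1}$, and using $A\cdot\lambda^{-1}=A$ (mod $0$) together with measure-preservation. Granting this, let $F_1,\dots,F_k$ be as in $(1)$ and let $A$ be invariant for both $\Gamma$ and $\Lambda$. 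The sets $\gamma\cdot F_i$ ($\gamma\in\Gamma$, $1\le i\le k$) are pairwise disjoint mod $0$ (in particular $F_1,\dots,F_k$ are pairwise disjoint) and they all lie inside $M=\bigsqcup_{\gamma\in\Gamma}\gamma\cdot X$. Intersecting the whole configuration with $A$ and using $\gamma\cdot A=A$ (mod $0$) to fold it down by $\Gamma$: with $P_0:=\bigsqcup_{i}(A\cap F_i)$ one has $\bigsqcup_{\gamma}\gamma\cdot P_0\subseteq A=\bigsqcup_{\gamma}\gamma\cdot(A\cap X)$ as disjoint unions of $\Gamma$-translates, and a one-line disjointness count then gives $m(P_0)\le m(A\cap X)$. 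Since $m(P_0)=\sum_i m(A\cap F_i)=k\,m(A\cap Y)$ by the fact above, this is exactly \eqref{eqi6_2}.

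\emph{Proof of $(2)\Rightarrow(1)$ (the hard direction).} The plan is to recast $(1)$ as a measurable matching problem and to recognize $(2)$ as its cut condition. A fundamental domain for $\Lambda$ is exactly a measurable transversal for the $\Lambda$-orbit relation, and such transversals correspond bijectively to measurable partitions $\{Y_\lambda\}_{\lambda\in\Lambda}$ of $Y$ via $F=\bigsqcup_{\lambda}Y_\lambda\cdot\lambda$; requiring in addition that $\{F_1,\dots,F_k\}$ packs by $\Gamma$ says that the $k$ chosen transversals together meet each $\Gamma$-orbit at most once. Thus $(1)$ asserts that one can choose, measurably and simultaneously over all $\Lambda$-orbits, $k$ distinct points in each $\Lambda$-orbit so that all chosen points lie in pairwise distinct $\Gamma$-orbits: a bipartite matching between the quotients $M/\Lambda$ and $M/\Gamma$ with demand $k$ on the $\Lambda$-side and capacity $1$ on the $\Gamma$-side. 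A set $A$ invariant for both $\Gamma$ and $\Lambda$ is precisely a union of $\Gamma$-orbits that is also a union of $\Lambda$-orbits, with $m(A\cap Y)$ measuring its $\Lambda$-orbits and $m(A\cap X)$ its $\Gamma$-orbits; so $(2)$ is the assertion that every such "blocking set" has enough $\Gamma$-room, i.e. the defect-Hall / max-flow--min-cut inequality for this matching.

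To produce the $F_i$ I would run an exhaustion-plus-augmentation argument. Consider the family of measurable $E\subseteq M$ that pack by $\Gamma$ and meet each $\Lambda$-orbit \emph{at most} $k$ times; it is closed under increasing unions, so Zorn's lemma yields a maximal such $E^{*}$. If $E^{*}$ meets a positive-measure set of $\Lambda$-orbits fewer than $k$ times, I would try to enlarge it: either on a positive-measure set of deficient $\Lambda$-orbits there is a point whose $\Gamma$-orbit does not yet meet $E^{*}$ (adjoin it, contradicting maximality), or else one must reshuffle $E^{*}$ along alternating $\Gamma$- and $\Lambda$-chains; the region reachable from the deficient part by such chains is then invariant for both $\Gamma$ and $\Lambda$, and on it every unit of $\Gamma$-capacity is used while some $\Lambda$-demand is unmet, i.e. $m(A\cap X)<k\,m(A\cap Y)$ for that $A$, contradicting $(2)$. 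Hence $E^{*}$ meets a.e. $\Lambda$-orbit exactly $k$ times, and a Lusin--Novikov measurable selection splitting the resulting $k$-to-one map $E^{*}\to Y$ gives $F_1,\dots,F_k$. It is probably cleanest to first decompose $M$ into doubly-invariant pieces on which the $\Gamma\times\Lambda$-action is ergodic, so that $(2)$ reduces to the single inequality $m(X_0)\ge k\,m(Y_0)$ on each piece.

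The main obstacle is this augmentation step carried out in the measurable category: one must select augmenting chains measurably and of controlled length, check that the reachable region is genuinely measurable and invariant for both groups, and make the final "all $\Gamma$-capacity consumed, some $\Lambda$-demand unmet" count rigorous (handling infinite $\Gamma$ or $\Lambda$, where $m(A\cap X)$ and $m(A\cap Y)$ may be infinite). The reduction of fundamental domains to transversals and the identification of $(2)$ with the cut condition are routine; the measurable combinatorics of the matching is where the real work lies.
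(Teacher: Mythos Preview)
Your argument for $(1)\Rightarrow(2)$ is essentially the paper's: show $m(A\cap F)=m(A\cap Y)$ for any $\Lambda$-fundamental domain $F$ by refining against the $Y$-tiling, then fold the packing $\{F_i\}$ down by $\Gamma$ into $X$. Nothing to add there.

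For $(2)\Rightarrow(1)$ you and the paper diverge sharply. The paper does \emph{not} build a measurable matching by hand. Instead it observes that the two commuting actions assemble into a single measure-preserving action of $\Gamma\times\Lambda$, takes $\tilde Y:=\bigcup_{i=1}^k Y\cdot\lambda_i$ for $k$ distinct $\lambda_i\in\Lambda$, and then invokes Dye's \emph{comparability theorem} (Nguyen Van Th\'e's survey, Theorem~V.1): for any two sets $E,F$ there is a $\Gamma\times\Lambda$-invariant $B$ with $E\cap B\prec_{\Gamma\times\Lambda}F\cap B$ and $F\cap B^c\prec_{\Gamma\times\Lambda}E\cap B^c$. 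Hypothesis $(2)$ applied to $B^c$ forces the second relation to be an equivalence, whence $\tilde Y\prec_{\Gamma\times\Lambda}X$. The explicit partition witnessing $\tilde Y\sim_{\Gamma\times\Lambda}X'\subseteq X$ is then unpacked: the $\Lambda$-parts of the moves turn each $Y_i$ into a $\Lambda$-fundamental domain $F_i$, while the $\Gamma$-parts exhibit the $F_i$ as $\Gamma$-translates of disjoint pieces of $X$, i.e.\ $\{F_i\}$ packs by $\Gamma$. The whole hard direction is about ten lines once the comparability theorem is quoted.

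Your plan is to redevelop, in effect, the content of that comparability theorem via Zorn plus augmenting chains. This is not wrong in spirit---Dye's theory is precisely a measurable Cantor--Schr\"oder--Bernstein / Hall package---but as you yourself flag, the augmentation step is where all the difficulty sits, and you have not carried it out. Two concrete issues: (a) the ``reachable region'' along alternating $\Gamma$/$\Lambda$ chains must be shown to be measurable and genuinely $\Gamma\times\Lambda$-invariant, and the accounting ``all $\Gamma$-capacity consumed, some $\Lambda$-demand unmet'' has to be made literal as $m(A\cap X)<k\,m(A\cap Y)$; (b) your fallback to ergodic decomposition of the $\Gamma\times\Lambda$-action is not available at this level of generality (arbitrary complete $\sigma$-finite $(M,\cB,m)$, no standardness assumed). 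So as written, $(2)\Rightarrow(1)$ is a plausible outline but not a proof. The fix is exactly what the paper does: recognize that the comparison you need is already packaged as the comparability theorem for the $\Gamma\times\Lambda$-equivalence relation, and quote it.
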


\begin{proof}
(ii)$\Rightarrow$(i). We have $m(M)=|\Gamma| m(X)=|\Lambda| m(Y)$. Since $m(X)\geq k m(Y)$ it follows that $|\Lambda|\geq k|\Gamma|$. Therefore $|\Lambda|\geq k$. Pick $k$ distinct elements $\lambda_1,\dots,\lambda_k$ in $\Lambda$. Let $Y_i:=Y\cdot\lambda_i$, $i\in\{1,\dots, k\}$, and $\tilde Y:=\cup_{i=1}^kY_i$. Obviously, each $Y_i$ is a fundamental domain for $\Lambda$.

For any $\Gamma\times\Lambda$-invariant set $A\in\cB$, we have 
\begin{equation}
m(A\cap\tilde Y)=\sum_{i=1}^k m(A\cap (Y\cdot\lambda_i))=\sum_{i=1}^k m((A\cdot\lambda_i)\cap (Y\cdot\lambda_i))=\sum_{i=1}^k m(A\cap Y)=k\, m(A\cap Y)\leq m(A\cap X).	
	\label{eqi6_3}
\end{equation}

We claim that $\tilde Y\lgl X$. By the comparability theorem \cite[Theorem V.1]{Ng73}, there exists a $\Gamma\times\Lambda$ invariant set $B\in\cB$ such that 
$$ \tilde Y\cap B\lgl X\cap B\mbox{ and }X\cap (M\setminus B)\lgl \tilde Y\cap (M\setminus B).$$

But, then there is a subset $\tilde Y'$ of $\tilde Y\cap (M\setminus B)$ such that $\tilde Y'\sim_{\Gamma\times\Lambda} X\cap(M\setminus B)$. Since $M\setminus B$ is $\Gamma\times\Lambda$-invariant, using the hypothesis and \eqref{eqi6_3}, this implies that 
$$m(X\cap (M\setminus B))\geq m(\tilde Y\cap (M\setminus B))\geq m(\tilde Y')=m(X\cap(M\setminus B)).$$
Therefore $\tilde Y\cap(M\setminus B)=Y'$ a.e., so $X\cap (M\setminus B)\sim_{\Gamma\times\Lambda} \tilde Y\cap (M\setminus B) $, and therefore 
$\tilde Y\lgl X$.

Thus, there is a subset $X'$ of $X$ such that $Y\sim_{\Gamma\times\Lambda} X'$. This means that there is a measurable partition $(Y_{\gamma,\lambda})_{\gamma\in\Gamma,\lambda\in\Lambda}$ of $\tilde Y$ and one for $X'$, $(X_{\gamma,\lambda})_{\gamma\in\Gamma,\lambda\in\Lambda}$, such that
$\gamma\cdot Y_{\gamma,\lambda}\cdot\lambda=X_{\gamma,\lambda}.$

For $i\in\{1,\dots,k\}$, let 
$$F_i:=\cup_{\gamma\in\Gamma,\lambda\in\Lambda}(Y_i\cap Y_{\gamma,\lambda})\cdot\lambda.$$ 
Since $F_i$ is obtained from $Y_i$ by partitioning it and applying the $\Lambda$-action on each piece, $F_i$ is also a fundamental domain for $\Lambda$.  

On the other hand, we have that $X_{\gamma,\lambda}^i:=\gamma\cdot (Y_i\cap Y_{\gamma,\lambda})\cdot\lambda$, $\gamma\in\Gamma,\lambda\in\Lambda,i\in\{1,\dots,k\}$ is a partition of $X'\subset X$. Moreover
$$F_i=\cup_{\gamma,\lambda}\gamma^{-1}\cdot X_{\gamma,\lambda}^i.$$

Since the sets $F_i$ are obtained by partitioning disjoint subsets of $X$ and applying the action $\Gamma$ on each piece, it follows that the conclusion (i) is satisfied. 

(i)$\Rightarrow$(ii). Let $A$ be a $\Gamma\times\Lambda$ invariant set. We have that the sets $(F_i)_{i=1}^k$ are disjoint and the sets $(\gamma\cdot \cup_i F_i)_{\gamma\in\Gamma}$ are disjoint. Since $X$ is a fundamental domain for $\Gamma$, we have
\begin{eqnarray*}
m\left(A\cap(\cup_i F_i)\right)&=&\sum_{\gamma} m\left(A\cap (\cup_i F_i)\cap (\gamma\cdot X)\right)=\sum_{\gamma} m\left((\gamma^{-1}\cdot A)\cap (\gamma^{-1}\cdot (\cup_i F_i))\cap X\right)\\
&=& m\left(A\cap X\cap \cup_{\gamma}\gamma^{-1}(\cup_i F_i)\right)\leq m(A\cap X).
\end{eqnarray*}
On the other hand, since $F_i$ and $Y$ are fundamental domains for $\Lambda$ we have
\begin{eqnarray*}
m(A\cap F_i)&=&\sum_{\lambda} m\left(A\cap F_i\cap (Y\cdot\lambda)\right)=\sum_{\lambda} m\left((A\cdot\lambda^{-1})\cap (F_i\cdot\lambda^{-1})\cap Y\right)\\
&=&m\left(A\cap Y\cap(\cup_{\lambda}(F_i\cdot\lambda^{-1}))\right)=m(A\cap Y).
\end{eqnarray*}
Thus
$$m(A\cap X)\geq m(A\cap \cup_i F_i)=\sum_i m(A\cap F_i)=k m(A\cap Y).$$
\end{proof}

With Theorem \ref{thi6}, we can now prove first Corollary \ref{cor1.5}.
\begin{proof}[Proof of Corollary \ref{cor1.5}]
The direct implication follows from (i)$\Rightarrow$(ii) in Theorem \ref{thi6}. For the converse, with Theorem \ref{thi6}, we obtain that there exists a fundamental domain $F$ for $Y$ such that $(\gamma\cdot F)_{\gamma\in\Gamma}$ are disjoint. Moreover, we have that 
$$m(F)=\sum_\lambda m(F\cap (Y\cdot\lambda))=\sum_\lambda m((F\cdot \lambda^{-1})\cap Y)=m(Y),$$
so $m(F)=m(X)$. Also
$$
m(F)=\sum_\gamma m(F\cap (\gamma\cdot X))=\sum_\gamma m((\gamma^{-1}\cdot F)\cap X)
=m\left(\cup_\gamma((\gamma^{-1}\cdot F)\cap X)\right)
\leq m(X)=m(F).$$
This implies that $X$ is contained in $\cup_\gamma\gamma^{-1}\cdot F$ a.e., so $\gamma_0\cdot X\subset \cup_{\gamma}\gamma\cdot F$ for all $\gamma_0$. And this implies that 
$\cup_\gamma\gamma\cdot F$ covers $M$, so $F$ is a fundamental domain for $\Gamma$ too.
\end{proof}

An easy consequence of Corollary \ref{cor1.5} is that, if the actions have a common non-negative tiling {\it function}, then they have a common fundamental domain.

\begin{definition}\label{df1}
Consider a measure-preserving action of a countable discrete group $\Gamma$ on a measure space $(M,\cB,m)$. We say that a measurable function $f$ on $M$ {\it tiles by $\Gamma$} if $f\geq 0$ and
\begin{equation}
	\sum_{\gamma\in\Gamma}f(\gamma\cdot x)=1,\mbox{ for $m$-a.e. }x\in M.
	\label{eqf1}
\end{equation}
\end{definition}

\begin{corollary}\label{corf2}
Consider two commuting measure-preserving actions of some countable discrete groups $\Gamma$ and $\Lambda$ on the same measure space $(M,\cB,m)$. Assume that $X$ is a fundamental domain for $\Gamma$, $Y$ is a fundamental domain for $\Lambda$, and $0<m(X),m(Y)<\infty$. Suppose there exists a function $f\geq0$ on $M$ that tiles by both $\Gamma$ and $\Lambda$. Then $\Gamma$ and $\Lambda$ have a common fundamental domain.
\end{corollary}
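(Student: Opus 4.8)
The plan is to reduce everything to Corollary~\ref{cor1.5}: it suffices to show that $m(A\cap X)=m(A\cap Y)$ for every $A\in\cB$ that is invariant for both $\Gamma$ and $\Lambda$ (equivalently, $\Gamma\times\Lambda$-invariant). So I would fix such an $A$ and work with the nonnegative measurable function $f_A:=f\cdot\mathbf 1_A$. Since $A$ is invariant, both actions restrict to $A$; I would check (purely measure-theoretic bookkeeping) that $A\cap X$ is a fundamental domain for $\Gamma$ acting on $A$ — its translates satisfy $\gamma\cdot(A\cap X)=A\cap(\gamma\cdot X)$ up to null sets, these are mutually disjoint, and their union is $A$ — and likewise that $A\cap Y$ is a fundamental domain for $\Lambda$ acting on $A$. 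Because $\Gamma$ is countable and $\gamma\cdot x\in A$ for all $\gamma$ and a.e. $x\in A$, the tiling identity restricts: $\sum_{\gamma}f_A(\gamma\cdot x)=1$ for a.e. $x\in A$, and symmetrically $\sum_{\lambda}f_A(x\cdot\lambda)=1$ for a.e. $x\in A$.

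The core step is then to evaluate $\int_A f_A\,dm$ in two ways. Using the decomposition $\mathbf 1_A=\sum_{\gamma\in\Gamma}\mathbf 1_{\gamma\cdot(A\cap X)}$ (a.e.), the fact that the $\Gamma$-action is measure preserving, and Tonelli's theorem (all integrands are nonnegative and $m$ is $\sigma$-finite), one obtains
\begin{equation*}
\int_A f_A\,dm=\sum_{\gamma\in\Gamma}\int_{A\cap X}f_A(\gamma\cdot x)\,dm(x)=\int_{A\cap X}\Big(\sum_{\gamma\in\Gamma}f_A(\gamma\cdot x)\Big)\,dm(x)=\int_{A\cap X}1\,dm=m(A\cap X).
\end{equation*}
Running the identical computation with the right $\Lambda$-action and the fundamental domain $A\cap Y$ gives $\int_A f_A\,dm=m(A\cap Y)$. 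Hence $m(A\cap X)=m(A\cap Y)$, and since $A$ was an arbitrary $\Gamma\times\Lambda$-invariant set, Corollary~\ref{cor1.5} yields a common fundamental domain for $\Gamma$ and $\Lambda$.

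I do not expect a serious obstacle; the statement is essentially a packaging of Corollary~\ref{cor1.5}. The only points needing care are the routine verifications that $A\cap X$ and $A\cap Y$ genuinely are fundamental domains for the restricted actions (so that the indicator decomposition holds up to a null set), the justification of the sum–integral interchange via nonnegativity and $\sigma$-finiteness, and the observation that all quantities involved are finite since $m(A\cap X)\le m(X)<\infty$. In particular no global integrability hypothesis on $f$ over $M$ is required, because after unfolding we only ever integrate over the finite-measure sets $A\cap X$ and $A\cap Y$.
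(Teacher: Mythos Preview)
Your proof is correct and follows essentially the same approach as the paper's: both arguments fix a $\Gamma\times\Lambda$-invariant set $A$, use the tiling identity together with the fundamental domains to compute $\int_M f\,\chi_A\,dm$ in two ways (obtaining $m(A\cap X)$ and $m(A\cap Y)$ respectively), and then invoke Corollary~\ref{cor1.5}. The only cosmetic difference is that you organize the unfolding by first restricting to $A$ and viewing $A\cap X$ as a fundamental domain for the restricted action, whereas the paper works directly on $X$ and uses $\chi_A(x)=\chi_A(\gamma\cdot x)$; the computations are the same.
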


\begin{proof}
Let $A$ be a set which is invariant for both $\Gamma$ and $\Lambda$. We have
$$m(X\cap A)=\int_X\chi_A(x)\,dm(x)=\int_X\left(\sum_{\gamma\in\Gamma}f(\gamma\cdot x)\right)\chi_A(x)\,dm(x)=\int_X\sum_{\gamma\in\Gamma}f(\gamma\cdot x)\chi_A(\gamma\cdot x)\,dm(x)$$
$$=\sum_{\gamma\in\Gamma}\int_{\gamma^{-1}\cdot X}f(x)\chi_A(x)\,dm(x)=\int_M f(x)\chi_A(x)\,dm(x).$$
Similarly
$$m(Y\cap A)=\int_M f(x)\chi_A(x)\,dm(x).$$
Therefore $m(X\cap A)=m(Y\cap A)$, and using Corollary \ref{cor1.5}, we obtain the conclusion.

\end{proof}
Next we give the proof of Theorem \ref{thi6}.

\begin{proof}[Proof of Theorem \ref{thi6b}]
Let $k+\epsilon=\frac{m(X)}{m(Y)}$ as in Definition \ref{def1.5}. The proof of Theorem \ref{thi6b} follows along the same lines as the proof of Theorem \ref{thi6} so we leave the details to the reader. One has to construct the sets $Y_i:=Y\cdot \lambda_i$, $i\in\{1,\dots,k\}$ as in the proof of Theorem \ref{thi6}, and add an extra piece $Y_\epsilon:=Y'\cdot{\lambda_{k+1}}$, where $Y'$ is a subset of $Y$ of measure $\epsilon\, m(Y)$, and $\lambda_{k+1}$ is a new element of $\Lambda$, different than the other $\lambda_i$'s. Then let $\tilde Y:=\cup_{i=1}^kY_i\cup Y_\epsilon$. With this construction, we will have equalities in all the inequalities that appear in the proof of Theorem \ref{thi6}, but otherwise the arguments are the same.
\end{proof}
\begin{example}\label{ex1.6} The condition (ii) in Theorem \ref{thi6b} is not always satisfied. 
Consider the measure space $M:=\br\times\{0,1\}$ with the Lebesgue measure on each copy of $\br$. Let $\Gamma=\Lambda=\bz$, and consider the action of $\Gamma$ on the left and $\Lambda$ on the right given by: $n\cdot(x,0)=(x+2n,0)$, $n\cdot (x,1)= (x+n,1)$, $(x,0)\cdot n=(x+n,0)$, $(x,1)\cdot n=(x+2n,1)$ for all $n\in\bz,x\in\br$. Then $\Gamma$ has the fundamental domain $X:=[0,2)\times\{0\}\cup[0,1)\times\{1\}$, and $\Lambda$ has the fundamental domain $Y:=[0,1)\times\{0\}\cup[0,2)\times\{1\}$, thus the two fundamental domains have the same measure. Note that $A:=\br\times\{0\}$ is invariant for both actions. However $m(A\cap X)=2\neq 1=m(A\cap Y)$. Thus, by Corollary \ref{cor1.5}, the actions do not have a common fundamental domain. 

\end{example}

Using Theorem \ref{thi6}, we establish some sufficient conditions for the existence of a common tiling system, conditions which are easier to check for some particular cases. We will use them in section 3. 
\begin{definition}\label{defi8}
Consider a measure-preserving action of a countable discrete group $\Gamma$ on a measure space $(M,\cB,m)$. Let $X\in\cB$ be of finite positive measure. For a set $A\in\cB$, we set
$$N_b(X;A):=\#\{\gamma\in\Gamma\,|\, m((\gamma\cdot X)\cap A)\neq0\mbox{ and }m((\gamma\cdot X)\cap(M\setminus A))\neq 0\}.$$
Let $(A_n)_{n\in\bn}$ be a sequence of sets in $\cB$ of positive finite measure. We say that $(A_n)_{n\in\bn}$ has {\it asymptotically zero $(\Gamma,X)$-boundary} if 
\begin{equation}\label{eqi4}
\limsup_{n\rightarrow\infty}\frac{N_b(X;A_n)}{m(A_n)}=0.
\end{equation}
\end{definition}

\begin{theorem}\label{thi9}
In the hypotheses of Theorem \ref{thi6b}, assume that there exists a sequence $(A_n)_{n\in\bn}$ in $\cB$ that has asymptotically zero $(\Gamma,X)$- {\it and }$(\Lambda,Y)$-boundary. Then the two actions have a common tiling system. 
\end{theorem}

\begin{proof}
For a set $A\in\cB$ of positive finite measure we set
$$N_i(X;A):=\#\{\gamma\in\Gamma\,|\, \gamma\cdot X\subset A\mbox{ a.e.}\}.$$
Since $X$ is a fundamental domain, we have that $A$ contains the union $I(A)$ of the sets $\gamma\cdot X$ that are contained in it, and is contained in the union $C(A)$ of the sets $\gamma\cdot X$ that intersect it. The number of sets $\gamma\cdot X$ that intersect $A$ is of course $N_i(X;A)+N_b(X;A)$. 
Let $f$ be a bounded non-negative measurable function on $M$ which is $\Gamma$-invariant, i.e. $f(\gamma\cdot x)=f(x)$ for all $\gamma$ and a.e. $x\in M$. We have
$$\frac{1}{m(A)}\int_{I(A)}f(x)\,dm(x)\leq\frac{1}{m(A)}\int_Af(x)\,dm(x)\leq\frac{1}{m(A)}\int_{C(A)}f(x)\,dm(x).$$
Therefore
\begin{equation}\label{eqi5}
\frac{ N_i(X;A)}{m(A)}\int_X f\,dm\leq\frac{1}{m(A)}\int_A f\,dm\leq \frac{N_i(X;A)+N_b(X;A)}{m(A)}\int_X f\,dm.
\end{equation}
Applying \eqref{eqi5} to $A_n$, using \eqref{eqi4}, we have that $\liminf$ of both sides is the same, so we obtain
\begin{equation}\label{eqi6}
\liminf_{n\rightarrow\infty}\frac{N_i(X;A_n)}{m(A_n)}\cdot\int_X f\,dm=\liminf_{n\rightarrow\infty}\frac{1}{m(A_n)}\int_{A_n}f\,dm.
\end{equation}
Also, if we take $f$ constant $1$ we obtain from \eqref{eqi6}
\begin{equation}\label{eqi7}
\liminf_{n\rightarrow\infty}\frac{N_i(X;A_n)}{m(A_n)}=\frac{1}{m(X)}.
\end{equation}
Combining \eqref{eqi6} and \eqref{eqi7} we obtain
\begin{equation}\label{eqi8}
\frac{1}{m(X)}\int_X f\,dm=\liminf_{n\rightarrow\infty}\frac{1}{m(A_n)}\int_{A_n}f\,dm.
\end{equation}

If $f$ is also $\Lambda$-invariant, then we obtain by the same argument that

\begin{equation}\label{eqi9}
\frac{1}{m(Y)}\int_Y f\,dm=\liminf_{n\rightarrow\infty}\frac{1}{m(A_n)}\int_{A_n}f\,dm.
\end{equation}

This implies that if $f$ is both $\Gamma$- and $\Lambda$-invariant, then
$$\frac{1}{m(X)}\int_Xf\,dm=\frac{1}{m(Y)}\int_Yf\,dm.$$

Now take $f=\chi_A$ for some $\Gamma\times\Lambda$-invariant set. Then we have 
$$\frac{1}{m(X)}m(A\cap X)=\frac{1}{m(Y)}m(A\cap Y).$$
Thus (ii) in Theorem \ref{thi6} is satisfied, so (i) is too.

\end{proof}

\begin{theorem}\label{thi10}
Assume that the hypotheses of Theorem \ref{thi6b} are satisfied. Suppose there exist three sequences $A_n\subset B_n\subset C_n$,  $n\in\bn$ of sets in $\cB$ of finite, positive measure, with the following properties:
\begin{enumerate}
\item
\begin{equation}\label{eqi10_1}
\lim_{n\rightarrow\infty}\frac{m(A_n)}{m(C_n)}=1.
\end{equation}
\item
For every $\gamma\in\Gamma$, and $n\in\bn$, if $m(A_n\cap(\gamma\cdot X))\neq0$ then $\gamma\cdot X\subset B_n$, and if $m(B_n\cap(\gamma\cdot X))\neq0$ then $\gamma\cdot X\subset C_n$. Similarly for $\Lambda$ and $Y$.
\end{enumerate}
Then the two actions have a common tiling system. 
\end{theorem}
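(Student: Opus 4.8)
The plan is to reduce Theorem \ref{thi10} to Theorem \ref{thi9} by showing that the sandwich hypothesis $A_n\subset B_n\subset C_n$ with $m(A_n)/m(C_n)\to 1$ forces the middle sequence $(B_n)$ to have asymptotically zero $(\Gamma,X)$- and $(\Lambda,Y)$-boundary. Once that is done, Theorem \ref{thi9} applies verbatim (the equality $m(X)=(k+\epsilon)m(Y)$ is already assumed), giving conclusions (i) and (ii) of Theorem \ref{thi6b}. So the whole content is the boundary estimate for $(B_n)$.

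First I would unwind the geometry of condition (ii). Fix $\gamma\in\Gamma$ with $m((\gamma\cdot X)\cap B_n)\neq 0$ but $m((\gamma\cdot X)\cap(M\setminus B_n))\neq 0$, i.e. $\gamma$ is counted in $N_b(X;B_n)$. Since $m((\gamma\cdot X)\cap B_n)\neq0$, the second half of (ii) gives $\gamma\cdot X\subset C_n$ a.e.; in particular $m((\gamma\cdot X)\cap C_n)=m(X)$. On the other hand $m((\gamma\cdot X)\cap A_n)$ must be $0$: if it were nonzero, the first half of (ii) would give $\gamma\cdot X\subset B_n$ a.e., contradicting $m((\gamma\cdot X)\cap(M\setminus B_n))\neq0$. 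Hence every boundary translate $\gamma\cdot X$ lies entirely inside $C_n\setminus A_n$ (up to measure zero). Because $X$ is a fundamental domain for $\Gamma$, distinct translates $\gamma\cdot X$ are essentially disjoint, so summing their measures,
\begin{equation*}
N_b(X;B_n)\cdot m(X)\;=\;\sum_{\gamma\ \text{boundary}} m(\gamma\cdot X)\;\leq\; m(C_n\setminus A_n)\;=\;m(C_n)-m(A_n).
\end{equation*}
Therefore
\begin{equation*}
\frac{N_b(X;B_n)}{m(B_n)}\;\leq\;\frac{m(C_n)-m(A_n)}{m(X)\,m(B_n)}\;\leq\;\frac{m(C_n)-m(A_n)}{m(X)\,m(A_n)}\;=\;\frac{1}{m(X)}\left(\frac{m(C_n)}{m(A_n)}-1\right),
\end{equation*}
using $m(A_n)\leq m(B_n)$ in the middle step. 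By \eqref{eqi10_1} the right-hand side tends to $0$, so $\limsup_n N_b(X;B_n)/m(B_n)=0$. The identical argument with $\Lambda$, $Y$ in place of $\Gamma$, $X$ (this is exactly the "Similarly for $\Lambda$ and $Y$" clause of hypothesis (ii)) shows $(B_n)$ also has asymptotically zero $(\Lambda,Y)$-boundary.

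Now $(B_n)$ is a sequence in $\cB$ of finite positive measure having asymptotically zero $(\Gamma,X)$- and $(\Lambda,Y)$-boundary, and $m(X)=(k+\epsilon)m(Y)$ by assumption, so Theorem \ref{thi9} applies with $(A_n)$ there taken to be our $(B_n)$; its conclusion is precisely that (i) and (ii) of Theorem \ref{thi6b} hold. I do not expect a serious obstacle here: the only point requiring care is the bookkeeping in the displayed geometric dichotomy — one must check that "$\gamma$ counted in $N_b$" genuinely forces $\gamma\cdot X\subset C_n\setminus A_n$ and not merely $\gamma\cdot X\subset C_n$, which is where the contrapositive use of the first half of (ii) enters. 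Everything else is the disjointness of fundamental-domain translates and the hypothesis $m(A_n)/m(C_n)\to1$.
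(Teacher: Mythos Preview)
Your proof is correct and follows the paper's own strategy: both arguments verify that the middle sequence $(B_n)$ has asymptotically zero $(\Gamma,X)$- and $(\Lambda,Y)$-boundary and then invoke Theorem \ref{thi9}. Your boundary estimate is in fact a bit more direct than the paper's --- you observe outright that every boundary translate $\gamma\cdot X$ sits in $C_n\setminus A_n$ and bound $N_b(X;B_n)\,m(X)\le m(C_n)-m(A_n)$, whereas the paper reaches the same conclusion via a squeeze on the interior counts $N_i(B_n)$ and $N_i(B_n)+N_b(B_n)$ --- but the overall route is the same.
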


\begin{proof}
We check the conditions in Theorem \ref{thi9}.

For a set $D\in\cB$ we set
$$N_i(D):=\#\{\gamma\in\Gamma\,|\,\gamma\cdot X\subset D\},$$
$$N_b(D):=\{\gamma\in\Gamma\,|\,m(\gamma\cdot X\cap D)\neq 0, m(\gamma\cdot X\cap (M\setminus D))\neq 0\}.$$

By analyzing which sets are contained in $A_n$ and which intersect also its complement, using the hypothesis and applying the measure $m$, we have:

$$\frac{m(A_n)}{m(B_n)}\leq\frac{m(A_n)}{m(B_n)}\frac{(N_i(A_n)+N_b(A_n))m(X)}{m(A_n)}\leq\frac{N_i(B_n)m(X)}{m(B_n)}\leq 1$$
This implies that 
$$\lim_{n\rightarrow\infty}\frac{N_i(B_n)m(X)}{m(B_n)}=1.$$
We use the same argument now for $B_n\subset C_n$ to see that
$$\lim_{n\rightarrow\infty}\frac{(N_i(B_n)+N_b(B_n))m(X)}{m(C_n)}=1$$
and therefore
$$\lim_{n\rightarrow\infty}\frac{(N_i(B_n)+N_b(B_n))m(X)}{m(B_n)}=1.$$
Subtracting the two relations we get 
$$\lim_{n\rightarrow\infty}\frac{N_b(B_n)}{m(B_n)}=0.$$
Similarly for $Y$ and $\Lambda$. Therefore the hypotheses of Theorem \ref{thi9} are satisfied and the result follows.
\end{proof}

\begin{definition}\label{defi14}
In the hypotheses of Theorem \ref{thi6b}, we define $\operatorname*{Aut}_{\Gamma,\Lambda}(M)$ to be the set of all measure preserving automorphisms $\phi$ of $M$ with the property that $\phi(\orb_\Gamma x)=\orb_\Gamma\phi(x)$, and 
$\phi(\orb_\Lambda x)=\orb_\Lambda\phi(x)$ for a.e. $x\in M$.
\end{definition}

\begin{proposition}\label{pr2.11}
In the hypotheses of Theorem \ref{thi6b}, assume in addition that every measurable set $A\in\cB$ that is invariant (a.e.) for $\Gamma$, $\Lambda$ and $\operatorname{Aut}_{\Gamma,\Lambda}(M)$ has $m(A)=0$ or $m(M\setminus A)=0$. Then the two action have a common tiling system.
\end{proposition}

\begin{proof}
Let $m_X$ and $m_Y$ be the measures defined by $m_X(A)=m(A\cap X)$ and $m_Y(A)=m(A\cap Y)$ for $A\in\cB$. We will restrict our attention to sets $A$ which are invariant for both $\Gamma$ and $\Lambda$. First, we claim that $m_X$ is absolutely continuous with respect to $m_Y$. Indeed if $A$ is $\Gamma\times\Lambda$-invariant, and $m(A\cap Y)=0$ then 
$$m(A)=m(\cup_{\lambda}(A\cap Y\cdot\lambda))=m(\cup_\lambda (A\cap Y)\cdot\lambda)=0,$$
and therefore $m_X(A)=0$.

Then by the Radon-Nikodym theorem, there is a measurable function $\Delta$ such that 
$$\int g\,dm_X=\int g\,\Delta\,dm_Y,$$
for all bounded measurable $\Gamma\times\Lambda$-invariant functions $g$. Moreover since $\Delta$ is measurable w.r.t. the $\Gamma\times\Lambda$-invariant sets, it follows that $\Delta$ is also $\Gamma\times\Lambda$-invariant.

We claim that $\Delta$ is also $\operatorname*{Aut}_{\Gamma,\Lambda}(M)$ invariant. Take $\phi\in\operatorname*{Aut}_{\Gamma,\Lambda}(M)$. 

First, we prove that $m_X$ and $m_Y$ are $\phi$-invariant. For this we prove that $\phi(X)$ is a fundamental domain for $\Gamma$. Indeed, if $x\in\phi(X)\cap \gamma\cdot\phi(X)$, for some $\gamma\neq e$ then there exist $y,y'\in X$, such that $x=\phi(y)=\gamma\cdot\phi(y')$. But from the definition of $\operatorname*{Aut}$, we have $\gamma\cdot\phi(y')=\phi(\gamma' \cdot y')$ for some $\gamma'\in \Gamma$. But then $y=\gamma'\cdot y'$ which implies ( because $y,y'\in X$) that $\gamma'=e$ and $y=y'$ so $\gamma=e$, a contradiction.

Then, take $x\in M$. There exist $\gamma\in\Gamma$ and $y\in X$ such that $\phi^{-1}(x)=\gamma\cdot y$. Then 
$x=\phi(\gamma\cdot y)=\gamma'\cdot\phi(y)$ for some $\gamma'\in\Gamma$. This implies that $\cup_\gamma\gamma\cdot\phi(X)=M$. Thus $\phi(X)$ is a fundamental domain for $\Gamma$. Relabelling, we have that $\phi^{-1}(X)$ is a fundamental domain for $\Gamma$.

Using the same argument as in the proof of Theorem \ref{thi6b} (ii)$\Rightarrow$(i), we have 
$m(A\cap X)=m(A\cap \phi^{-1}(X))$ for all $\Gamma\times\Lambda$-invariant sets. Then 
$$m_X(\phi(A))=m(\phi(A)\cap X)=m(A\cap \phi^{-1}(X))=m(A\cap X)=m_X(A).$$
(Note that $\phi(A)$ is also $\Gamma\times\Lambda$-invariant.) 

The same argument can be applied to $m_Y$ and we conclude that both $m_X$ and $m_Y$ are invariant under $\phi$. But then for any bounded measurable $\Gamma\times\Lambda$-invariant function $g$ we have
$$\int g\Delta\circ\phi\,dm_Y=\int g\circ\phi^{-1}\,\Delta\,dm_Y=\int g\circ\phi^{-1}\,dm_X=\int g\, dm_X=\int g\Delta\,dm_Y.$$

This shows that $\Delta\circ\phi=\Delta$ so $\Delta$ is $\operatorname*{Aut}_{\Gamma,\Lambda}(M)$-invariant. 

We conclude that $\Delta$ is constant a.e., and (using $g=1$) $\Delta= \frac{m(X)}{m(Y)}$. Therefore the condition (ii) in Theorem \ref{thi6b} is satisfied and the result follows.

\end{proof}

Next, we will show that common tiling systems can be obtained even if the assumption that the actions commute is weakened.

\begin{definition}
Let $\Lambda$ and $\Gamma$ be countable groups and $\alpha:\Lambda\rightarrow\mbox{Aut}(\Gamma)$ a group morphism. The semidirect product $\Lambda\rtimes_{\alpha}\Gamma$ is defined as the set of pairs $(\lambda, \gamma)$ equipped with the following multiplication:
$$(\lambda_1, \gamma_1)(\lambda_2, \gamma_2)=(\lambda_1\lambda_2, \gamma_1\alpha(\lambda_1)\gamma_2)$$
Notice $(\lambda, \gamma)^{-1}=(\lambda^{-1}, \alpha(\lambda^{-1})\gamma^{-1})$. 

\end{definition}
We will prove that Corollary \ref{cor1.5} holds in a more general setting: the actions of the groups $\Lambda$ and $\Gamma$ are not required to commute, however they are restrictions of a measure-preserving action of the semidirect product on the measure space $(M,\cB,m).$
\begin{proposition}\label{pr2.13}
 Let $\sigma$ be a measure-preserving action of the semi-direct product $\Lambda\rtimes_{\alpha}\Gamma$ on $(M,\cB,m)$ such that $\sigma_{| 1 \times \Gamma}$ has a fundamental domain $X$ and $\sigma_{| \Lambda \times 1}$ has a fundamental domain $Y$. Assume also that $0< m(X), m(Y)<\infty$ and $m(X)\geq m(Y)$. Then the following statements are equivalent:
 \begin{enumerate}
	\item The two actions have a common tiling system.
	\item For every $A\in \B$ which is invariant for both $\Gamma$ and $\Lambda$, the following equation holds 
	$$m(A\cap X)=\frac{m(X)}{m(Y)}\cdot m(A\cap Y).$$
\end{enumerate}
  
\end{proposition}

\begin{proof} For simplicity we prove the theorem in the case $m(X)=m(Y)$, the general case can be proved using a similar argument. Implication (i)$\Rightarrow$(ii) follows as in the proof of Theorem \ref{thi6}. 
Using the same argument as in the proof Theorem \ref{thi6}, we can find a partition $(X_{\lambda,\gamma})$ of $X$ and a partition $Y_{\lambda,\gamma}$ of $Y$ such that 
$(\lambda,\gamma)\cdot X_{\lambda,\gamma}=Y_{\lambda,\gamma}.$ Then $D:=\cup_{\lambda,\gamma}(\lambda,1)^{-1}Y_{\lambda,\gamma}$ is a fundamental domain for $\sigma_{| \Lambda \times 1}$. We claim that $D$ is a fundamental domain for $\sigma_{| 1 \times \Gamma}$. Notice
$$D=\cup_{\lambda,\gamma}(\lambda^{-1},1)\cdot(\lambda,\gamma)\cdot X_{\lambda,\gamma}=\cup_{\lambda,\gamma}(1,\alpha(\lambda^{-1})(\gamma))\cdot X_{\lambda,\gamma}.$$
To finish the claim and the proof we need to show that the orbit of a.e. $x\in M$, under the action  $\sigma_{| 1 \times \Gamma}$ intersects $D$ exactly once. Because $X$ is a $1\times\Gamma$-fundamental domain there exists $\gamma$ such that $(1,\gamma)x\in  X_{\overline{\lambda}, \overline{\gamma}}$ for some $\overline{\lambda}$, $\overline{\gamma}$. Therefore $(1,\alpha(\overline{\lambda}^{-1})\overline{\gamma})(1,\gamma)\cdot x\in D$, so that the orbit of $x$ under the action of $1\times\Gamma$ intersects $D$. If $(1,\gamma_1)\cdot x\in D$ and $(1,\gamma_2)\cdot x\in D$ then 

$$(1,\gamma_1)\cdot x\in (1,\alpha(\overline{\lambda}^{-1})\overline{\gamma})\cdot X_{\overline{\lambda},\overline{\gamma}}\mbox{    for some  }\overline{\gamma}\mbox{, }\overline{\lambda}$$
$$ (1,\gamma_2)x\in (1,\alpha({\hat{\lambda}}^{-1})\hat{\gamma})X_{\hat{\lambda},\hat{\gamma}}\mbox{    for some  }\hat{\gamma}\mbox{, }\hat{\lambda}.$$
Thus
$$ (1,\alpha(\overline{\lambda}^{-1})\overline{\gamma})^{-1}(1,\gamma_1)\cdot x\in X_{\overline{\lambda},\overline{\gamma}}$$
$$ (1,\alpha({\hat{\lambda}}^{-1})\hat{\gamma})^{-1}(1,\gamma_2)\cdot x\in X_{\hat{\lambda},\hat{\gamma}}.$$
From the last two lines above we would get that the orbit of $x$ under the action of $1\times\Gamma$ intersects $X$ at least twice unless $\overline{\lambda}=\hat{\lambda}$, $\overline{\gamma}=\hat{\gamma}$ and $\gamma_1=\gamma_2$. In conclusion $D$ is a fundamental domain for both $1\times\Gamma$ and $\Lambda\times 1$. 
\end{proof}

\begin{corollary}\label{cor2.14}
Let $\alpha$ and $\beta$ be measure-preserving actions of countable groups $\Gamma$ and $\Lambda$ on $(M,\cB,m)$ with fundamental domains $X,Y$ respectively, $0<m(X),m(Y)<\infty$ and $m(X)\geq m(Y)$. Assume the following property is satisfied:
$$\forall\lambda_1,\lambda_2\in\Lambda, \gamma\in\Gamma\mbox{ :     }\beta(\lambda_1)\alpha(\gamma)\beta({\lambda_2}^{-1})\in\alpha(\Gamma)\Leftrightarrow \lambda_1=\lambda_2.$$

Then the following statements are equivalent:
 \begin{enumerate}
	\item The two actions have a common tiling system.
	\item For every $A\in \B$ which is invariant for both $\Gamma$ and $\Lambda$, the following equation holds 
	$$m(A\cap X)=\frac{m(X)}{m(Y)}\cdot m(A\cap Y).$$
\end{enumerate}
\end{corollary}
\begin{proof}
 Let $G$ be the subgroup of Aut$(M,m)$ generated by $\alpha(\Gamma)$ and $\beta(\Lambda)$. The property in the hypothesis guarantees that $G$ can be identified with the semidirect product $\alpha(\Gamma)\rtimes_{\rho}\beta(\Lambda)$ where the morphism $\rho:\beta(\Lambda)\rightarrow\mbox{Aut}(\alpha(\Gamma))$ is defined by $\rho(\lambda)(\gamma)=\lambda\gamma{\lambda}^{-1}$. 
\end{proof}

\section{Lattices}\label{lat}

In this section we prove some results about fundamental domains for lattices in
locally compact unimodular groups. As for lattices in Lie groups, the reader
may want to consult the papers \cite{Pra76a, Pra76b}.

Let $G$ be a locally compact unimodular group with fixed Haar measure $m$. Let $\Gamma$ be some discrete countable subgroup of $G$. We denote by $G/\Gamma$ the space of right-cosets, and by $\Gamma\backslash G$ the space of left-cosets.

\begin{lemma}\label{lemj1}
For $\varphi\in C_c(G)$, set 
$$(\tau_r\varphi)(\Gamma\cdot g):=\sum_{\xi\in\Gamma}\varphi(\xi\cdot g)$$
and 
$$(\tau_l\varphi)(g\cdot\Gamma):=\sum_{\xi\in\Gamma}\varphi(g\cdot\xi),$$
for $g\in G$.

(a) There is a unique invariant measure $m_r$ on $\Gamma\backslash G$ such that 
\begin{equation}\label{eqj1}
\int_{\Gamma\backslash G}(\tau_r\varphi)(\Gamma\cdot g)\psi_1(\Gamma\cdot g)\,dm_r(\Gamma\cdot g)=\int_G\varphi(g)\psi_1(\Gamma\cdot g)\,dm(g)
\end{equation}
for all $\psi_1\in C_c(\Gamma\backslash G)$.

(b) There is a unique invariant measure $m_l$ on $G/\Gamma$ such that 
\begin{equation}\label{eqj2}
\int_{G/\Gamma}(\tau_l\varphi)(g\cdot\Gamma)\psi_2(g\cdot\Gamma)\,dm_l(g\cdot\Gamma)=\int_G\varphi(g)\psi_2(g\cdot\Gamma)\,dm(g)
\end{equation}
for all $\varphi\in C_c(G)$, and $\psi_2\in C_c(G/\Gamma)$.
\end{lemma}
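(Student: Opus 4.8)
The plan is to prove this classical Weil quotient-integral formula by realizing $m_r$ (respectively $m_l$) as the push-forward of the Haar measure $m$ under the averaging map $\tau_r$ (respectively $\tau_l$). I will describe only part (a) in detail; part (b) is its mirror image, obtained by interchanging the roles of left and right translations and yields \eqref{eqj2} in the same way. Unimodularity of $G$ enters both parts solely through the fact that $m$ is at once left- and right-invariant.

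First I would record the elementary properties of $\tau_r$. Since $\Gamma$ is discrete it is closed in $G$, so $\Gamma\backslash G$ is locally compact Hausdorff and the quotient map $\pi\colon G\to\Gamma\backslash G$ is open; for $\varphi\in C_c(G)$ only finitely many $\xi\in\Gamma$ satisfy $\xi\cdot g\in\operatorname{supp}\varphi$ as $g$ ranges over a fixed compact set, so $\tau_r\varphi$ is a continuous, compactly supported function that genuinely depends only on the coset $\Gamma\cdot g$ (by a reindexing of the sum), and $\tau_r\varphi\ge0$ when $\varphi\ge0$. The structural fact I would then establish is that $\tau_r\colon C_c(G)\to C_c(\Gamma\backslash G)$ is \emph{onto}: given $\psi\in C_c(\Gamma\backslash G)$, choose (using openness of $\pi$) a compact $L\subset G$ with $\pi(L)\supset\operatorname{supp}\psi$ and then $h\in C_c(G)$ with $h\ge0$ and $h>0$ on $L$, so that $(\tau_r h)(\Gamma\cdot g)>0$ for every $\Gamma\cdot g\in\operatorname{supp}\psi$; the function $\varphi(g):=h(g)\psi(\Gamma\cdot g)/(\tau_r h)(\Gamma\cdot g)$, set equal to $0$ where $\tau_r h$ vanishes, then lies in $C_c(G)$ and satisfies $\tau_r\varphi=\psi$, and may be chosen nonnegative if $\psi\ge0$.

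Next I would define a linear functional $I$ on $C_c(\Gamma\backslash G)$ by $I(\psi):=\int_G\varphi\,dm$, where $\varphi\in C_c(G)$ is any function with $\tau_r\varphi=\psi$. The one genuinely non-formal step, which I expect to be the main obstacle, is that $I$ is well defined, i.e.\ that $\tau_r\varphi=0$ implies $\int_G\varphi\,dm=0$. For this I would fix $h\ge0$ in $C_c(G)$ with $(\tau_r h)(\Gamma\cdot g)\ge1$ for $g\in\operatorname{supp}\varphi$ (possible since $\pi(\operatorname{supp}\varphi)$ is compact) and compute
\[
\int_G\varphi\,dm=\int_G\frac{\varphi(g)}{(\tau_r h)(\Gamma\cdot g)}\sum_{\xi\in\Gamma}h(\xi\cdot g)\,dm(g)=\sum_{\xi\in\Gamma}\int_G\frac{\varphi(\xi^{-1}\cdot g)}{(\tau_r h)(\Gamma\cdot g)}\,h(g)\,dm(g)=\int_G\frac{h(g)}{(\tau_r h)(\Gamma\cdot g)}\,(\tau_r\varphi)(\Gamma\cdot g)\,dm(g)=0,
\]
where the middle equalities use left-invariance of $m$ under $g\mapsto\xi^{-1}\cdot g$ together with $\Gamma\cdot\xi^{-1}\cdot g=\Gamma\cdot g$ and a reindexing $\xi\mapsto\xi^{-1}$ of the $\Gamma$-sum, the sums being locally finite so the interchanges with the integral are legitimate, and where the denominators cause no harm because $(\tau_r h)(\Gamma\cdot g)\ge h(g)$ and is $\ge1$ on $\operatorname{supp}\varphi$. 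Granting well-definedness, $I$ is linear and positive (a nonnegative $\psi$ has a nonnegative preimage by the previous paragraph), so by the Riesz representation theorem there is a Radon measure $m_r$ on $\Gamma\backslash G$ with $I(\psi)=\int_{\Gamma\backslash G}\psi\,dm_r$; in particular $\int_{\Gamma\backslash G}(\tau_r\varphi)\,dm_r=\int_G\varphi\,dm$ for all $\varphi\in C_c(G)$, and applying this to $g\mapsto\varphi(g)\psi_1(\Gamma\cdot g)$ in place of $\varphi$, together with $\tau_r\big(\varphi\cdot(\psi_1\circ\pi)\big)=\psi_1\cdot(\tau_r\varphi)$, gives \eqref{eqj1}.

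Finally, for invariance I would use that $\tau_r(R_a\varphi)(\Gamma\cdot g)=(\tau_r\varphi)(\Gamma\cdot g\cdot a)$ for $(R_a\varphi)(g):=\varphi(g\cdot a)$, so that $\int_{\Gamma\backslash G}\psi(\Gamma\cdot g\cdot a)\,dm_r=\int_G\varphi(g\cdot a)\,dm(g)=\int_G\varphi\,dm=\int_{\Gamma\backslash G}\psi\,dm_r$, the middle equality being right-invariance of $m$; hence $m_r$ is invariant under the right $G$-action on $\Gamma\backslash G$. Uniqueness is immediate: any invariant measure satisfying \eqref{eqj1} must integrate each $\tau_r\varphi$ to $\int_G\varphi\,dm$ (take $\psi_1$ equal to $1$ on $\pi(\operatorname{supp}\varphi)$), and since $\tau_r$ is onto this pins down the integral of every element of $C_c(\Gamma\backslash G)$. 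Part (b) is then proved by the symmetric argument applied to $\tau_l$ on the right cosets $g\cdot\Gamma\in G/\Gamma$ with the left translation action of $G$. The only real obstacle in the whole argument is the well-definedness computation above; once surjectivity of the averaging map and that computation are in hand, everything else is formal.
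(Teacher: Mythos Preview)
The paper does not actually include a proof of this lemma; it is stated without proof, presumably as a classical result (the Weil quotient-integral formula). Your argument is correct and is precisely the standard proof one finds in the literature: surjectivity of the averaging map $\tau_r\colon C_c(G)\to C_c(\Gamma\backslash G)$, the well-definedness computation for the induced functional, Riesz representation, and then invariance from right-invariance of $m$. The well-definedness step and the use of both left- and right-invariance of $m$ (hence unimodularity) are handled correctly. There is nothing to compare against in the paper itself.
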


\begin{definition}\label{defi11}
($a_1$) A measurable subset $\Omega$ of $G$ is said to be a {\it left-fundamental domain} for $\Gamma$ if 
\begin{equation}\label{eqj3}
G=\bigcup_{\xi\in\Gamma}\xi\cdot\Omega,m\mbox{-a.e.} 
\end{equation}
and
\begin{equation}\label{eqj4}
\xi_1\cdot\Omega\cap\xi_2\cdot\Omega=\emptyset, m\mbox{-a.e., for }\xi_1\neq\xi_2\mbox{ in }\Gamma
\end{equation}

($a_2$) We say that $\Gamma$ is a {\it left-lattice} if $0<m_r(\Gamma\backslash G)<\infty.$

($b_1$) A measurable subset $\Omega$ of $G$ is said to be a {\it right-fundamental domain} for $\Gamma$ if 
\begin{equation}
	G=\bigcup_{\xi\in\Gamma}\Omega\cdot\xi,m\mbox{-a.e.}
	\label{eqj5}
\end{equation}
and
\begin{equation}
	\Omega\cdot\xi_1\cap\Omega\cdot\xi_2=\emptyset,m-\mbox{a.e., for }\xi_1\neq\xi_2\mbox{ in }G.
	\label{eqj6}
\end{equation}

($b_2$) We say that $\Gamma$ is a {\it right-lattice} if $0<m_r(G/\Gamma)<\infty$.
\end{definition}

\begin{remark}\label{rmi12}
If a locally compact group $G$ contains a lattice, then $G$ is unimodular, i.e., the left Haar measure is also right-invariant. See \cite[Proposition 1.3]{VGS00}.
\end{remark}

\begin{remark}\label{rmj4a}
When $G$ and $\Gamma$ are given, $\Gamma$ discrete in $G$, there might be no measurable subset $\Omega\subset G$ which satisfies \eqref{eqj3} and \eqref{eqj4}. To see this, take $G=\br$, $\Gamma=\mathbb Q$. In this case, it is known that a measurable fundamental domain does not exist. 
\end{remark}

\begin{remark}\label{rmj4b}
 The quotients we build starting with the Heisenberg group $G = H(\br)$ are examples of nilmanifolds. Specifically, a compact nilmanifold is a compact Riemannian manifold which is locally isometric to a nilpotent Lie group with left-invariant metric. These spaces are constructed as follows: take a simply connected nilpotent Lie group $G$ which admits a lattice. It is well known that a nilpotent Lie group admits a lattice if and only if its Lie algebra admits a basis with rational structure constants: this is Malcev's criterion \cite{Mal55}. Then a lattice in the Lie algebra gives rise to a discrete subgroup $\Gamma$. We endow $G$ with a left-invariant (Riemannian) metric. Now $\Gamma$ can be viewed as a discrete group of isometries acting on $G$ by left multiplication, since we endowed $G$ with a left-invariant metric, defined directly from the Lie algebra. Via translations in $G$, we see that the tangent space at every point $g$ in $G$ is isomorphic to the Lie algebra.
\end{remark}
\begin{lemma}\label{lemj3}

Let $G$ be a locally compact unimodular group with fixed Haar measure $m$. Let $\Gamma$ be some discrete countable subgroup of $G$. Then $\Gamma$ is a left-lattice in $G$ iff it is a right-lattice in $G$; in this case $m_r(\Gamma\backslash G)=m_r(G/\Gamma)$. If $\Omega$ is any fundamental domain for $\Gamma$ on the left, then 
\begin{equation}
	m_r(\Gamma\backslash G)=m(\Omega)
	\label{eqj7}
\end{equation}
\end{lemma}

\begin{proof}
The first part of the lemma is immediate, just apply the map $g\mapsto g^{-1}$.

Assume $\Omega$ and $\Gamma$ satisfy the conditions in the statement of the lemma, in particular that $0<m(\Omega)<\infty$. In that case, we may pick $\varphi=\chi_\Omega$. Then $\tau_r\varphi=1$, $m$-a.e., and 
$$m_r(\Gamma\backslash G)=\int_{\Gamma\backslash G}(\tau_r\varphi)(g)\,dm_r=\int_G\varphi\,dm=m(\Omega).$$
\end{proof}

\begin{definition}\label{defcov}
Let $\Gamma$ be a lattice in the locally compact unimodular group $G$. The number $\covol_G(\Gamma):=m_r(\Gamma\backslash G)=m_l(G/\Gamma)$ is called the {\it co-volume} of the lattice $\Gamma$. We say that the lattice $\Gamma$ is {\it uniform} if it has a compact fundamental domain. 
\end{definition}

\subsection{Groups of polynomial growth} We can prove now Theorem \ref{thi13}.

\begin{proof}[Proof of Theorem \ref{thi13}]
We use Theorem \ref{thi10}. Let $\Omega$ be a symmetric compact set that generates $G$. Let $X,Y$ be compact fundamental domains for $\Gamma$ and $\Lambda$ respectively. Replacing $\Omega$ by $A\cup A^{-1}$ where $A:=(\Omega\cup X\cup Y)$, we may assume that $X,Y\subset \Omega$. 

Then define $A_n:=\Omega^n$, $B_n:=\Omega^{n+2}$, $C_n:=\Omega^{n+4}$ for $n\in\bn$.

By \cite[Theorem 1.1]{Bre07}, since $G$ has polynomial growth, there exist a constant $c(\Omega)>0$ and an integer $d(G)\geq 0$ such that 
$$\lim_{n\rightarrow\infty}\frac{m(\Omega^n)}{n^{d(G)}}=c(\Omega).$$
This implies that $\lim_n m(A_n)/m(C_n)=1$.

Also, if $m(\gamma\cdot X\cap A_n)\neq 0$ for some $\gamma\in\Gamma$, then there exists $x\in X$ such that $\gamma\cdot x\in\Omega^n$ so $\gamma\in\Omega^n x^{-1}\subset \Omega^{n+1}$. And then $\gamma\cdot X\subset \Omega^{n+1}\cdot\Omega=B_n$. A similar argument works for the inclusion from $B_n$ to $C_n$ in Theorem \ref{thi10}(ii), and for the action of $\Lambda$ and $Y$, and this proves that the hypotheses of Theorem \ref{thi10} are satisfied. Therefore the conclusion holds.
\end{proof}

\subsection{The Heisenberg group}

  The purpose of this section is to present an application of Theorem \ref{thi13} to pairs of lattices in the Heisenberg group $G :=H(\br)$.

As we demonstrate, it is typically relatively easy for a fixed lattice to select an associated  (measurable) fundamental domain. But if we consider a pair of lattices in $G$, one acting on the left and the second on the right, it is not at all clear how to decide whether or not the two have a common fundamental domain in the ambient group $G$. Our Theorem \ref{thi13} implies that a common fundamental domain can be found if and only if the two lattices have the same co-volume.

    Harmonic analysis on pairs of lattices has many applications, and we refer to the following three papers for some of them \cite{Rie78,Rie81a, Rie81b}.

    Since the same question is of relevance for the more general setting of pairs of lattices in locally compact unimodular groups $G$, we begin the section with some general lemmas about lattices in this context. The result we state below is for the case when $G$ is the 3-dimensional Heisenberg group, but it applies more generally. However, for the Heisenberg group, there is a ready supply of automorphisms that preserve Haar measure. In fact this subgroup of $\Aut G$ for $G$ the Heisenberg group, is a 3-dimensional simple Lie group.

     The formulas \eqref{eqj12} given for the three one-parameter groups of automorphisms in $G$ should help the reader visualize some of the candidates for fundamental domains in the 3-dimensional Heisenberg group.  Since the automorphisms listed in \eqref{eqj12} preserve the center in $G$, their action may be understood from a consideration of $\br^2$, but all three coordinates in $G$ enter in the determination of fundamental domains. While $G:=H(\br)$ is 3-dimensional, formula \eqref{eqj9} makes it clear that it is a non-trivial central extension of the additive group of $\br^2$.

   Our  purpose with the present application is to make clear that the possibility of common fundamental domains for the systems of lattices is non-trivial. Here we consider lattices in $G$ arising from applications of automorphisms to a fixed lattice $\Gamma$ in $G$. Even in this case, it is not at all easy to read off common fundamental domains from elementary geometry. Nonetheless the existence of such common fundamental domains follows as a result of an application of our main result.

 \begin{remark} We believe our conclusions adapt mutatis mutandis to higher dimensional Heisenberg groups. But here we use the Heisenberg group mainly as illustration, and to highlight the idea, we state results only for the standard 3-dimensional Heisenberg group.
 \end{remark}

 \begin{example}\label{ex5}
 Let $G=H(\br)$ be the Heisenberg group, i.e., $G$ is a copy of $\br^3=\br^2\times\br$, $g=(x,c)$, $x=(x_1,x_2)\in\br^2$, $c\in\br$; with multiplication
 \begin{equation}
	(x,c)\cdot(y,d):=(x+y,c+d+x_1y_2-x_2y_1),\mbox{ for all } x=(x_1,x_2), y=(y_1,y_2)\in\br^2, c,d\in\br.
	\label{eqj9}
\end{equation}

The Haar measure on $G$ coincides with the Lebesgue measure on $\br^3$.

Let $$\Gamma:=H(\bz)=\{(x,y,z)\in G\,|\,x,y,z\in\bz\}.$$

An easy check shows that, as fundamental domain $\Omega$ for $\Gamma$ in $G$ on the left, we may take
\begin{equation}
	Q:=\{(x,c)\,|\,x_1,x_2,c\in[0,1)\}
	\label{eqj10}
\end{equation}
i.e., the unit cube.

The Heisenberg group is a connected nilpotent Lie group, hence it has polynomial growth (see \cite{Bre07} and the references therein).
\end{example}

\begin{remark}
There are other ways to obtain fundamental domains for $H(\bz)$. For this, a little Lie theory helps. The Lie algebra of $G:=H(\br)$ is a copy of $\br^3$, and the exponential mapping $\exp_G$ of $G$ is known to have the following form:

In $\br^3$, the Lie algebra $\mathfrak g$ is represented by the following three vector fields:
\begin{equation}
\left\{\begin{array}{c}
	X_1 =\frac{\partial}{\partial x_1}-x_2\frac{\partial}{\partial x_3},\\
	X_2 =\frac{\partial}{\partial x_2}+x_1\frac{\partial}{\partial x_3},\\
	X_3 =\frac{\partial}{\partial x_3}.\end{array}\right.
	\label{eqjj1}
	\end{equation}

\begin{lemma}\label{lemjj1}
The exponential mapping $\exp_G$ of $G$, $\exp_G:\br^3\rightarrow G$ has the following formula: for $x=(x_1,x_2,x_3)\in\br^3$,
\begin{equation}
	\exp_G(x_1X_1 +x_2X_2 +x_3X_3 )=(x_1,x_2,x_3+2x_1x_2).
	\label{eqjj2}
\end{equation}
Moreover
$$\psi_G(x_1,x_2,x_3)=\exp_G(\sum_{i=1}^3x_iX_i )$$
is a diffeomorphism of $\br^3$ onto $G$, and $\det(\textup{Jac}_{\psi_G})=1$. 

The set $\Omega:=\psi_G(Q)$ is a fundamental domain for $G$. 
\end{lemma}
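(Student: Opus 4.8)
The plan is to derive the closed formula \eqref{eqjj2} for $\exp_G$ first, then read off the diffeomorphism and Jacobian assertions from it, and finally verify directly that $\psi_G(Q)$ tiles $G$ under the left action of $\Gamma=H(\bz)$. For the formula: a one-line bracket computation from \eqref{eqjj1} gives $[X_1,X_2]=2X_3$ and $[X_1,X_3]=[X_2,X_3]=0$, so $\mathfrak{g}$ is two-step nilpotent with one-dimensional center $\br X_3$. Hence $\exp_G(x_1X_1+x_2X_2+x_3X_3)$ is the value at $t=1$ of the one-parameter subgroup $\gamma$ of $G$ with $\gamma'(0)=x_1X_1+x_2X_2+x_3X_3$, equivalently of the integral curve through the identity of the left-invariant field $x_1X_1+x_2X_2+x_3X_3$; since only a single bracket survives, one may instead use Baker--Campbell--Hausdorff, $\exp\xi\cdot\exp\eta=\exp\!\big(\xi+\eta+\frac12[\xi,\eta]\big)$, or a matrix model of $H(\br)$. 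Any of these routes reduces to a linear ODE and produces \eqref{eqjj2}; this is routine, the only delicate point being that the constant $2$ there is the one matching the normalization $[X_1,X_2]=2X_3$ and the group law \eqref{eqj9}.

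Granting \eqref{eqjj2}, the map $\psi_G(x_1,x_2,x_3)=(x_1,x_2,x_3+2x_1x_2)$ is a polynomial bijection of $\br^3$ whose inverse $(y_1,y_2,y_3)\mapsto(y_1,y_2,y_3-2y_1y_2)$ is also polynomial, so $\psi_G$ is an analytic diffeomorphism of $\br^3$ onto $G$; and its Jacobian matrix is lower triangular with all diagonal entries equal to $1$, so $\det(\textup{Jac}_{\psi_G})\equiv 1$. (Both facts are instances of the general principles that $\exp$ is an analytic diffeomorphism for a simply connected nilpotent Lie group and that $(d\exp)_X=(1-e^{-\operatorname{ad}X})/\operatorname{ad}X$ has determinant $1$ whenever $\operatorname{ad}X$ is nilpotent; the explicit formula just makes this immediate.)

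For the last assertion, combining \eqref{eqj10} with \eqref{eqjj2} gives
$$\Omega=\psi_G(Q)=\{(x_1,x_2,c)\ :\ x_1,x_2\in[0,1),\ c\in[\,2x_1x_2,\ 2x_1x_2+1\,)\},$$
so $\Omega$ is the unit cube $Q$ with each central fiber $\{x_1\}\times\{x_2\}\times[0,1)$ translated by $2x_1x_2$ in the $x_3$-direction. Fix $(y_1,y_2,y_3)\in G$. There are unique $m,n\in\bz$ and $x_1,x_2\in[0,1)$ with $y_1=m+x_1$, $y_2=n+x_2$, and then the condition $(m,n,k)^{-1}\cdot(y_1,y_2,y_3)\in\Omega$, unwound via \eqref{eqj9}, forces $k$ to be an integer lying in a fixed half-open interval of length $1$ (exactly the computation of Example \ref{ex5}, with $[0,1)$ replaced by $[\,2x_1x_2,\,2x_1x_2+1\,)$ in the last slot). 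Such an integer exists and is unique, and then $m,n$ are forced as well, so the left $\Gamma$-orbit of every point of $G$ meets $\Omega$ in exactly one point up to measure zero; that is, $\Omega$ is a fundamental domain for $\Gamma=H(\bz)$ acting on the left, as in Example \ref{ex5}. The one step with genuine content is the computation of $\exp_G$ leading to \eqref{eqjj2} (and pinning down the constant); everything after that is elementary.
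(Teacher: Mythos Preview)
Your argument is correct. The derivation of \eqref{eqjj2} follows the same idea as the paper (the bracket relations $[X_1,X_2]=2X_3$, $[X_i,X_3]=0$, then Baker--Campbell--Hausdorff), though the paper carries out the factorization $\exp_G(u_1X_1)\exp_G(u_2X_2)\exp_G((u_1u_2+u_3)X_3)$ explicitly while you only sketch it; your remarks on the diffeomorphism and the Jacobian are in fact more detailed than the paper's.

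Where you genuinely diverge is in the fundamental-domain argument. The paper reasons conceptually: it uses BCH once more to show that $\psi_G$ carries the additive tiling $Q\dotplus\bz^3=\br^3$ to a multiplicative tiling $\psi_G(Q)\cdot\psi_G(\bz^3)=G$, checks that $\psi_G(\bz^3)=H(\bz)$, and concludes that $\Omega$ is a \emph{right} fundamental domain. You instead describe $\Omega$ explicitly as a ``sheared'' unit cube and verify directly, by the same coordinate computation that underlies Example~\ref{ex5}, that each \emph{left} $\Gamma$-orbit meets $\Omega$ exactly once. Your route is more hands-on and mirrors the earlier example; the paper's route explains \emph{why} $\psi_G(Q)$ should be a fundamental domain (the exponential intertwines the lattice structures), which generalizes more readily to other nilpotent groups.
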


\begin{proof}
We prove \eqref{eqjj2}. For this note that $[X_1 ,X_2]=2X_3 $ and $[X_i ,X_3 ]=0$ for all $i\in\{1,2,3\}$. Therefore $\mathfrak g^{(3)}=0$ since $[Y,Z]$ is in the center of $\mathfrak g$ for all $Y,Z\in\mathfrak g$. So, by the Campbell-Baker-Poincar\' e formula 
\begin{equation}\label{eqbcp}
\exp_G(Y+Z)=\exp_G(Y)\exp_G(Z)\exp_G(\frac{1}{2}[Y,Z]).
\end{equation}
We have 
\begin{eqnarray*}\exp_G(u_1X_1+u_2X_2+u_3X_3 )&=&\exp_G(u_1X_1 +u_2X_2 )\exp_G(u_3X_3 )\mbox{ since $X_3 $ is in the center}\\
&=&\exp_G(u_1X_1)\exp_G(u_2X_2)\exp_G(\frac12[u_1X_1,u_2X_2])\exp_G(u_3X_3)\\
&=&\exp_G(u_1X_1)\exp_G(u_2X_2)\exp((u_1u_2+u_3)X_3)\\
&=&(u_1,0,0)\cdot(0,u_2,0)\cdot (0,0,u_1u_2+u_3)\\
&=&(u_1,u_2,2u_1u_2+u_3).
\end{eqnarray*}

Next we prove that $\psi_G(Q)$ is a fundamental domain. Indeed, let $Q\dotplus\bz^3=\br^3$ be the standard tiling of $\br^3$ by the lattice $\bz^3$, now viewing $\br^3$ as an additive group.

An application of \eqref{eqjj2} yields the following formula wich relates the splitting of $(\br^n,+)$ into tiles to the corresponding splitting in the Heisenberg group $H(\br)$, the latter with respect to the non-commutative product in $H(\br)$. In the more general context of Lie theory, the exponential mapping from the Lie algebra is only a local diffemorphism, and is not clear what is then the optimal lattice correspondence: In the general case, if the dimension is $n$, the Lie algebra can be tiled with lattices relative to the additive structure of $\br^n$. But then the corresponding fundamental domain in $\br^n$ may not account for all the subtleties of fundamental domains in the associated Lie group $G$, even if we specialize to stratified simply connected Lie groups.

We have
$$G=\psi_G(\br^3)=\psi_G(Q+\bz^3)=\psi_G(Q)\psi_G(\bz^3).$$
To see that the last equality holds, we use the following computation: by \eqref{eqbcp} we have, for $\omega=(\omega_1,\omega_2,\omega_3)\in Q$ and $n=(n_1,n_2,n_3)\in\bz^3$
$$\psi_G(\omega+n)=\exp_G(\omega_1X_1+\omega_2X_2)\cdot\exp_G(n_1X_1+n_2X_2)\exp_G((\omega_1n_2-\omega_2n_1+\omega_3+n_3)X_3)$$
(we used also that $X_3$ is in the center of $\mathfrak g$.)

Now pick $\varphi\in[0,1)$ and $m\in\bz$ such that $\omega_1n_2-\omega_2n_1+\omega_3+n_3=\varphi+m$. Then, using again that $X_3$ is in the center:
$$\psi_G(\omega+n)=\exp_G(\omega_1X_1+\omega_2X_2+\varphi X_3)\exp_G(n_1X_1+n_2X_2+mX_3)\in\psi_G(Q)\cdot\psi_G(\bz^3).$$

Since $\Gamma:=H(\bz)=\psi_G(\bz^3)$, the set $\Omega:=\psi_G(Q)$ satisfies $\Omega\cdot\Gamma=G$; and a direct check shows that $\Omega\cdot\xi_1\cap\Omega\cdot\xi_2=\ty$ for $\xi_1\neq\xi_2$ in $\Gamma$.
\end{proof}

\end{remark}
\begin{definition}\label{defj6} 
An automorphism $\alpha$ in $G$, i.e., $\alpha\in\Aut G$ is said to preserve Haar measure if $m\circ\alpha=m$. The subgroup of automorphisms that preserve the Haar measure will be denoted by $\textup{Aut}_0(G)$.

\end{definition}

 \begin{lemma}\label{lem7}
 
 Let $G=H(\br)$ be the Heisenberg group; then $\textup{Aut}_0(G)$ contains the 3-dimensional subgroup generated by 
 \begin{equation}
	\left.
	\begin{array}{c}
	(x_1,x_2,c)\mapsto(px_1,p^{-1}x_2,c)\\
	(x_1,x_2,c)\mapsto(x_1+sx_2,x_2,c)\\
	(x_1,x_2,c)\mapsto(x_1,tx_1+x_2,c)
	\end{array}\right.
	\label{eqj12}
\end{equation}
where $p\in\br^+,s,t\in\br$. In fact the Lie group in \eqref{eqj12} coincides with $\textup{Aut}_0(G)$.
 \end{lemma}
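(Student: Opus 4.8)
The plan is to verify the two assertions separately: first, that each of the three one-parameter families in \eqref{eqj12} does lie in $\textup{Aut}_0(G)$; second, that these three one-parameter groups already generate all of $\textup{Aut}_0(G)$, which is a 3-dimensional simple Lie group. For the first part I would simply check directly from the multiplication law \eqref{eqj9} that each map $\alpha$ listed is an automorphism (i.e. $\alpha((x,c)(y,d))=\alpha(x,c)\alpha(y,d)$), which is a short computation using $x_1y_2-x_2y_1$ and bilinearity of the determinant; and that the Jacobian of each map is $1$ (each is affine in the coordinates $(x_1,x_2,c)$ with unipotent or $\mathrm{SL}$-type linear part), hence $m\circ\alpha=m$. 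This establishes the inclusion.

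For the reverse inclusion, the cleanest route is to pass to the Lie algebra $\mathfrak g$ with basis $X_1,X_2,X_3$ and bracket $[X_1,X_2]=2X_3$, $[X_i,X_3]=0$ (as recorded around \eqref{eqjj1}–\eqref{eqbcp}). Any $\alpha\in\Aut G$ differentiates to a Lie algebra automorphism $d\alpha$ of $\mathfrak g$. Since the center $\mathfrak z=\br X_3=[\mathfrak g,\mathfrak g]$ is characteristic, $d\alpha$ preserves $\mathfrak z$ and descends to an automorphism $\bar\alpha$ of $\mathfrak g/\mathfrak z\cong\br^2$; so $d\alpha$ is determined by a matrix $B\in\mathrm{GL}_2(\br)$ on the quotient, a scalar $\delta\in\br^\times$ on $X_3$, and a linear "cross term" $\br^2\to\mathfrak z$. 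Compatibility with the bracket forces $\delta=\det B$: indeed $d\alpha[Y,Z]=[d\alpha Y,d\alpha Z]$ evaluated on $Y,Z$ projecting to a basis of $\br^2$ gives exactly $\delta\,(2X_3)=2(\det B)X_3$. Since $G$ is simply connected nilpotent, $\exp_G$ is a global diffeomorphism and every such $d\alpha$ exponentiates to a genuine $\alpha\in\Aut G$; moreover $\alpha$ preserves Haar measure iff $|{\det}d\alpha|=1$, and from the block-triangular form $\det d\alpha=(\det B)\cdot\delta=(\det B)^2>0$, so $m\circ\alpha=m$ iff $\det B=\pm1$. Combined with the free cross-term $\br^2$, this shows $\textup{Aut}_0(G)$ has dimension $\dim(\{B:\det B=\pm 1\})+\dim\br^2\cdot\ldots$ — more precisely its identity component is $\{(B,v):\det B=1\}$, which is $3+2$... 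I must be careful here: the cross-term contributes, but cross-terms of the form $v\mapsto$ an inner derivation are already accounted for by conjugations, which are built from \eqref{eqj12}; a direct dimension count of the matrix group shows the identity component is 3-dimensional once one mods out correctly, matching the three generators.

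The main obstacle I expect is precisely this last point: pinning down the dimension and showing the three explicit generators in \eqref{eqj12} exhaust the identity component of $\textup{Aut}_0(G)$, rather than just a proper subgroup. The two unipotent generators realize the upper- and lower-triangular one-parameter subgroups of $\mathrm{SL}_2(\br)$ acting on $(x_1,x_2)$, and the first generator $(x_1,x_2,c)\mapsto(px_1,p^{-1}x_2,c)$ realizes the diagonal torus; together these generate all of $\mathrm{SL}_2(\br)$, which is itself 3-dimensional and simple. So the cleanest phrasing is: differentiating gives a homomorphism $\textup{Aut}_0(G)^{\circ}\to\mathrm{SL}_2(\br)$ (via $B$, using $\det B=1$ on the identity component and discarding the cross-term, which turns out to contribute nothing new modulo the image of these generators because such shears are inner), show it is an isomorphism onto, and identify the preimages of the three standard generating one-parameter subgroups of $\mathrm{SL}_2(\br)$ with the three maps in \eqref{eqj12}. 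I would write this out carefully, treating the center and the cross-term explicitly, since that bookkeeping is where an error would hide; everything else is routine Lie theory for simply connected nilpotent groups.
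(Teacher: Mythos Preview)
The paper's own proof is a one-line citation to Dixmier, so your approach is entirely different in spirit: you attempt a direct Lie-theoretic computation. The first half of your plan (verifying that each map in \eqref{eqj12} is a Haar-preserving automorphism) is routine and correct.

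The second half, however, contains a genuine error, and you in fact put your finger on it without resolving it. Your own block-triangular analysis of $d\alpha$ is right: a Lie algebra automorphism is determined by $B\in\mathrm{GL}_2(\br)$ on $\mathfrak g/\mathfrak z$, the scalar $\delta=\det B$ on $X_3$, and a free cross-term $v:\br^2\to\mathfrak z$, so that $\det(d\alpha)=(\det B)^2$ and the Haar-preserving condition is $\det B=\pm1$. That makes the identity component of $\textup{Aut}_0(G)$ five-dimensional, exactly the $3+2$ you wrote down before hesitating. Your attempted fix --- that the cross-terms are inner and ``built from \eqref{eqj12}'' --- is false. Every map in \eqref{eqj12} leaves the third coordinate $c$ untouched, and since each is linear in $(x_1,x_2,c)$, so does any composition; hence the group generated by \eqref{eqj12} is precisely $\{(x,c)\mapsto(Bx,c):B\in\mathrm{SL}_2(\br)\}\cong\mathrm{SL}_2(\br)$. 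By contrast, conjugation by $(a_1,a_2,b)$ sends $(x_1,x_2,c)\mapsto(x_1,x_2,\,c+2a_1x_2-2a_2x_1)$, which fixes $(x_1,x_2)$ and moves $c$; these inner automorphisms are Haar-preserving but lie outside the group generated by \eqref{eqj12}.

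So your dimension count was correct and your instinct that something was off was right; the step that fails is the claim that the inner automorphisms are absorbed by the three generators. As literally stated, the second sentence of the lemma cannot be proved by your method because the group generated by \eqref{eqj12} is a proper $3$-dimensional subgroup of the $5$-dimensional $\textup{Aut}_0(G)^\circ$. What is true, and what Dixmier's description gives, is that $\textup{Aut}_0(G)^\circ/\mathrm{Inn}(G)\cong\mathrm{SL}_2(\br)$, with the three one-parameter families in \eqref{eqj12} giving an explicit splitting of this quotient; for the application in Corollary~\ref{corj8} this is all that matters, since inner automorphisms send $H(\bz)$ to a conjugate lattice.
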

\begin{proof}
 This follows from a theorem of Dixmier  \cite{Dix68,Dix70}.
\end{proof}
We now read off the following existence result from our main theorem:
 \begin{corollary}\label{corj8}
 Let $G=H(\br)$ be the Heisenberg group, and $\Gamma=H(\bz)$ the standard lattice in $G$. Let $\alpha\in\textup{Aut}_0(G)$, and consider the two lattices $\Gamma=H(\bz)$, and $\Lambda:=\Lambda_\alpha:=\alpha(H(\bz))$ with $\Gamma$ acting on the left, and $\Lambda_\alpha$ on the right.

 Then the two lattices have a common fundamental domain.
 \end{corollary}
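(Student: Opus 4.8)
The plan is to apply Theorem~\ref{thi13} directly, so the entire task reduces to verifying its single hypothesis: that $\covol_G(\Gamma) = \covol_G(\Lambda_\alpha)$. Here $G = H(\br)$ has polynomial growth (it is a connected nilpotent Lie group, as noted in Example~\ref{ex5}), and both $\Gamma = H(\bz)$ and $\Lambda_\alpha = \alpha(H(\bz))$ are uniform lattices: $\Gamma$ has the compact fundamental domain $Q$ from \eqref{eqj10}, and since $\alpha$ is a homeomorphism of $G$, the set $\alpha(Q)$ is a compact fundamental domain for $\Lambda_\alpha$ on the left (and $\alpha(Q)^{-1}$, or a suitable translate, works on the right by Lemma~\ref{lemj3}). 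So the only thing to check is the equality of co-volumes, with the special case $k=1$, $\epsilon=0$ of Theorem~\ref{thi13} then yielding a common fundamental domain.

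First I would invoke Lemma~\ref{lemj3}, which gives $\covol_G(\Gamma) = m(Q)$, the Lebesgue measure of the unit cube, namely $1$. Next, by the same lemma applied to $\Lambda_\alpha$, we have $\covol_G(\Lambda_\alpha) = m(\Omega')$ for any left-fundamental domain $\Omega'$ of $\Lambda_\alpha$; take $\Omega' = \alpha(Q)$. Indeed, $\Lambda_\alpha \cdot \alpha(Q) = \alpha(\Gamma) \cdot \alpha(Q) = \alpha(\Gamma \cdot Q) = \alpha(G) = G$ up to measure zero, using that $\alpha$ is a group automorphism and a measure-isomorphism, and the $\Gamma$-translates of $Q$ being essentially disjoint forces the $\Lambda_\alpha$-translates of $\alpha(Q)$ to be essentially disjoint as well. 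Hence $\covol_G(\Lambda_\alpha) = m(\alpha(Q))$.

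The key point is then that $\alpha \in \textup{Aut}_0(G)$ means precisely $m \circ \alpha = m$ (Definition~\ref{defj6}), so $m(\alpha(Q)) = m(Q) = 1 = \covol_G(\Gamma)$. Concretely, by Lemma~\ref{lem7} every such $\alpha$ is a composition of the three one-parameter automorphisms in \eqref{eqj12}, each of which is a volume-preserving linear-type map on $\br^3$ (the diagonal map $(x_1,x_2,c)\mapsto(px_1,p^{-1}x_2,c)$ has Jacobian determinant $p\cdot p^{-1}\cdot 1 = 1$, and the two shear maps are triangular with unit diagonal), so the composite has Jacobian determinant $1$ and preserves Lebesgue measure; this also reproves directly that $\alpha \in \textup{Aut}_0(G)$. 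Therefore $\covol_G(\Gamma) = (1+0)\covol_G(\Lambda_\alpha)$, and Theorem~\ref{thi13} applies with $k=1$, $\epsilon=0$, giving conclusions (i) and (ii) of Theorem~\ref{thi6b}; in particular there is a single fundamental domain for $\Lambda_\alpha$ that packs by $\Gamma$ and (having full measure) is a fundamental domain for $\Gamma$ too, i.e.\ a common fundamental domain.

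There is essentially no obstacle here: the corollary is a clean specialization of Theorem~\ref{thi13}, and the only mild care needed is the bookkeeping that an element of $\textup{Aut}_0(G)$ sends a fundamental domain of $\Gamma$ to one of $\Lambda_\alpha$ of equal measure, which is immediate from $m\circ\alpha = m$ together with $\alpha$ being a group automorphism. If one wanted a fully self-contained argument one could also note that measure-preservation of $\alpha$ is exactly the hypothesis, so no appeal to the explicit generators in \eqref{eqj12} is logically required, though they make the statement vivid.
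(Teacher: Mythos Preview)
Your proof is correct and follows essentially the same approach as the paper's own proof, which is a terse two-line invocation of Lemma~\ref{lemj3} (to conclude equal co-volumes) followed by Theorem~\ref{thi13}. You have simply filled in the details the paper leaves implicit---that $\alpha(Q)$ is a fundamental domain for $\Lambda_\alpha$ and that $m(\alpha(Q))=m(Q)$ because $\alpha\in\textup{Aut}_0(G)$---so there is nothing to add.
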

  \begin{proof}
 Since $\Gamma$ and $\Lambda$ have the same co-volume by Lemma \ref{lemj3}, the conclusion follows from Theorem \ref{thi13}.
 \end{proof}

\begin{remark}
Since the Heisenberg group falls in the class of nilpotent Lie groups to which Malcev's theorem applies \cite{Mal55}, the lattices in $G$ have the form \begin{equation}\label{eql1}
\Gamma=\exp_G(L)\end{equation}
where $L$ is a lattice in the Lie algebra $\mathfrak g$ of $G$, i.e., for some basis $X_1,\dots, X_D$ in $\mathfrak g$
\begin{equation}\label{eql2}
L=\{\sum_{i=1}^D n_iX_i\,|\,n_i\in\bz\}.
\end{equation}
Recall the structure constants $c_{i,j}^k$ in a basis $(X_i)$ are given by
$$[X_i,X_j]=\sum_{k=1}^Dc_{i,j}^kX_k.$$
Since $\mathfrak g$ is nilpotent, i.e., there is an $n$ such that $\mathfrak g^{n+1}=[\mathfrak g^n,\mathfrak g]=0$, the lattices $L$ in \eqref{eql2} can be determined inductively.

For the Heisenberg group we selected a basis for $\mathfrak g\cong\br^3$, $X_1,X_2,X_3$ with $[X_1,X_2]=2X_3$.

We now show how to build the lattices in $\mathfrak g$ from lattices $K$ in $\br^2=\textup{span}(X_1,X_2)$. A lattice in $\br^2$ has the form 
$K=\{n_1Y_1+n_2Y_2\,|\,n_i\in\bz\}$ for a basis $Y_1,Y_2$ in $\br^2$. Let $A$ be the invertible $2\times 2$ matrix given by $AX_i=Y_i$. Then $[Y_1,Y_2]=(\det A)2X_3$. Hence $K$ extends to a lattice $L$ in $\mathfrak g$ such that $\Gamma=\exp_G(L)$ is a lattice in $G$ if and only if $\det A\in\frac12\bz\setminus\{0\}$. Specifically
$$L=\{n_1Y_1+n_2Y_2+n_3X_3\,|\,n_i\in\bz\}.$$
\end{remark}

\begin{acknowledgements}
We would like to thank professor Dan Mauldin for discussions and insights.
\end{acknowledgements}


\begin{thebibliography}{Pra76b}

\bibitem[Bre07]{Bre07}
Emmanuel Breuillard.
\newblock Geometry of locally compact groups of polynomial growth and shape of
  large balls.
\newblock {\em arXiv:0704.0095v1}, 2007.

%
\bibitem[Dix68]{Dix68}
Jacques Dixmier.
\newblock Sur les alg\`ebres de {W}eyl.
\newblock {\em Bull. Soc. Math. France}, 96:209--242, 1968.

\bibitem[Dix70]{Dix70}
Jacques Dixmier.
\newblock Sur les alg\`ebres de {W}eyl. {II}.
\newblock {\em Bull. Sci. Math. (2)}, 94:289--301, 1970.

\bibitem[Fur99]{Fur}
Alex Furman.
\newblock Orbit equivalence rigidity.
\newblock {\em Ann. of Math. (2)}, 150(3):1083--1108, 1999.

\bibitem[Fur09]{Fur2}
Alex Furman.
\newblock A survey of measured group theory.
\newblock {\em arXiv:0901.0678}, 2009.

\bibitem[Gro93]{Gro}
M.~Gromov.
\newblock Asymptotic invariants of infinite groups.
\newblock In {\em Geometric group theory, {V}ol.\ 2 ({S}ussex, 1991)}, volume
  182 of {\em London Math. Soc. Lecture Note Ser.}, pages 1--295. Cambridge
  Univ. Press, Cambridge, 1993.

\bibitem[GS08]{GS08}
Loukas Grafakos and Christopher Sansing.
\newblock Gabor frames and directional time-frequency analysis.
\newblock {\em Appl. Comput. Harmon. Anal.}, 25(1):47--67, 2008.

\bibitem[Har77]{Har77}
W.J. Harvey.
\newblock {\em Discrete groups and automorphic functions}.
\newblock Academic Press, London, 1977.

\bibitem[Har92]{Har92}
W.~J. Harvey.
\newblock Modular groups---geometry and physics.
\newblock In {\em Discrete groups and geometry ({B}irmingham, 1991)}, volume
  173 of {\em London Math. Soc. Lecture Note Ser.}, pages 94--103. Cambridge
  Univ. Press, Cambridge, 1992.

\bibitem[HW01]{HW01}
Deguang Han and Yang Wang.
\newblock Lattice tiling and the {W}eyl-{H}eisenberg frames.
\newblock {\em Geom. Funct. Anal.}, 11(4):742--758, 2001.

%

\bibitem[JP09]{JoPe09}
Palle~E.T. Jorgensen and Erin~P.J. Pearse.
\newblock Operator theory of electrical resistance networks.
\newblock {\em arXiv:0806.3881}, 2009.

\bibitem[LPT01]{Pa00}
Lek-Heng Lim, Judith A. Packer, Keith F. Taylor
\newblock A direct integral decomposition of the wavelet representation.
\newblock {\em Proc. Amer. Math. Soc.},{129}, 2001, {10},3057--3067		
\bibitem[Mal55]{Mal55}
A.~I. Malcev.
\newblock Two remarks on nilpotent groups.
\newblock {\em Mat. Sb. N.S.}, 37(79):567--572, 1955.

\bibitem[Ngh73]{Ng73}
Dang~Ngoc Nghiem.
\newblock On the classification of dynamical systems.
\newblock {\em Ann. Inst. H. Poincar\'e Sect. B (N.S.)}, 9:397--425, 1973.

\bibitem[Pra76a]{Pra76b}
Gopal Prasad.
\newblock Discrete subgroups isomorphic to lattices in {L}ie groups.
\newblock {\em Amer. J. Math.}, 98(4):853--863, 1976.

\bibitem[Pra76b]{Pra76a}
Gopal Prasad.
\newblock Discrete subgroups isomorphic to lattices in semisimple {L}ie groups.
\newblock {\em Amer. J. Math.}, 98(1):241--261, 1976.

\bibitem[Rie78]{Rie78}
Marc~A. Rieffel.
\newblock Regularly related lattices in {L}ie groups.
\newblock {\em Duke Math. J.}, 45(3):691--699, 1978.

\bibitem[Rie81a]{Rie81a}
Marc~A. Rieffel.
\newblock Ergodic decomposition for pairs of lattices in {L}ie groups.
\newblock {\em J. Reine Angew. Math.}, 326:45--53, 1981.

\bibitem[Rie81b]{Rie81b}
Marc~A. Rieffel.
\newblock von {N}eumann algebras associated with pairs of lattices in {L}ie
  groups.
\newblock {\em Math. Ann.}, 257(4):403--418, 1981.

\bibitem[VGS00]{VGS00}
E.~B. Vinberg, V.~V. Gorbatsevich, and O.~V. Shvartsman.
\newblock Discrete subgroups of {L}ie groups [ {MR}0968445 (90c:22036)].
\newblock In {\em Lie groups and {L}ie algebras, {II}}, volume~21 of {\em
  Encyclopaedia Math. Sci.}, pages 1--123, 217--223. Springer, Berlin, 2000.

\bibitem[Wat05]{Wa05}
Nigel Watt.
\newblock Fourier coefficients of modular forms and eigenvalues of a {H}ecke
  operator.
\newblock {\em Funct. Approx. Comment. Math.}, 34:27--146, 2005.

\bibitem[ZM08]{ZM08}
Robert~J. Zimmer and Dave~Witte Morris.
\newblock {\em Ergodic theory, groups, and geometry}, volume 109 of {\em CBMS
  Regional Conference Series in Mathematics}.
\newblock Published for the Conference Board of the Mathematical Sciences,
  Washington, DC, 2008.

\end{thebibliography}
\end{document}